\newtheorem{thm}{Theorem}[section]
\newtheorem{cor}[thm]{Corollary}
\newtheorem{prop}[thm]{Proposition}
\theoremstyle{definition}
\newtheorem{dfn}[thm]{Definition}
\newtheorem{ntn}[thm]{Notation}
\theoremstyle{remark}
\newtheorem{rmk}[thm]{Remark}
\newtheorem{example}[thm]{Example}
\newcommand{\FF}{\mathbb{F}}
\newcommand{\NN}{\mathbb{N}}
\newcommand{\QQ}{\mathbb{Q}}
\newcommand{\TT}{\mathbb{T}}
\newcommand{\ZZ}{\mathbb{Z}}
\newcommand{\Mm}{\mathcal{M}}
\newcommand{\Oo}{\mathcal{O}}
\newcommand{\1}{\mathbf{1}}
\newcommand{\id}{\operatorname{id}}
\newcommand{\image}{\operatorname{Im}}
\newcommand{\Aut}{\operatorname{Aut}}
\newcommand{\coker}{\operatorname{coker}}
\newcommand{\Hom}{\operatorname{Hom}}
\newcommand{\Ext}{\operatorname{Ext}}
\newcommand{\Obj}{\operatorname{Obj}}
\newcommand{\dom}{\operatorname{dom}}
\newcommand{\cod}{\operatorname{cod}}
\newcommand{\Bcub}[1]{B^{#1}}
\newcommand{\Ccub}[1]{C^{#1}}
\newcommand{\Hcub}[1]{H^{#1}}
\newcommand{\Zcub}[1]{Z^{#1}}
\newcommand{\dcub}[1]{\delta^{#1}}
\title[Crossed products]{Crossed products and twisted $k$-graph algebras}
\author{Nathan Brownlowe}
\author{Valentin Deaconu} 
\author{Alex Kumjian}
\address{Valentin Deaconu, Alex Kumjian\\ Department of Mathematics (084)\\ University
of Nevada\\ Reno NV 89557-0084\\ USA} \email{vdeaconu@unr.edu, alex@unr.edu}
\author{David Pask}
\address{Nathan Brownlowe, David Pask \\ School of Mathematics and
Applied Statistics  \\
The University of Wollongong\\
NSW  2522\\
AUSTRALIA} \email{nathanb@uow.edu.au, dpask@uow.edu.au}
\thanks{This research was supported by the Australian Research Council and the University of Wollongong Research Committee.}
\subjclass[2010]{Primary 46L05; Secondary 18G60, 55N10.}
\keywords{Higher-rank graph; $C^*$-algebra; cohomology; crossed product.}
\date{\today}
\begin{document}

\begin{abstract}
An automorphism $\beta$ of a $k$-graph $\Lambda$ induces a crossed product $C^* ( \Lambda ) \rtimes_\beta \ZZ$
which is isomorphic to a $(k+1)$-graph algebra $C^* ( \Lambda \times_\beta \ZZ )$. In this paper we show how this process interacts with $k$-graph $C^*$-algebras which have been twisted by an element of their second cohomology group. This analysis is done using a long exact sequence in cohomology associated to this data. We conclude with some examples. 

\end{abstract}

\maketitle

\noindent
A higher-rank graph (or $k$-graph) is a countable category $\Lambda$ together with a functor $d : \Lambda \to \NN^k$ satisfying a factorisation property. For $k=1$,  $\Lambda$ is the path category of a directed graph $E_\Lambda$. In general we view a $k$-graph   as a higher dimensional analog of a directed graph. In \cite{KP2000} it was shown how to associate a $C^*$-algebra to a $k$-graph in such a way that for $k=1$ we have $C^* ( \Lambda ) =C^* ( E_\Lambda )$. 

The universal property of a $k$-graph $C^*$-algebra $C^* ( \Lambda )$ implies that an automorphism $\beta$ of $\Lambda$ induces an automorphism $\beta$ of $C^* ( \Lambda )$ and hence gives rise to a crossed product $C^* ( \Lambda ) \rtimes_\beta \ZZ$. 
The results of \cite{FPS2009} show that there is a $(k+1)$-graph $\Lambda \times_\beta \ZZ$ such that $C^* ( \Lambda ) \rtimes_\beta \ZZ$ is isomorphic to $C^* ( \Lambda \times_\beta \ZZ )$. 
The purpose of this paper is to examine how this situation generalizes in the setting of twisted $k$-graph $C^*$-algebras.

Recent attention has been drawn to the homological properties of a $k$-graph $\Lambda$ which are nontrivial when $k \ge 2$. 
Specifically in \cite{KPS3,KPS4} two cohomology theories for a $k$-graph $\Lambda$ are described: cubical and categorical. 
Twisted versions of $k$-graph $C^*$-algebras are introduced in both cases using $2$-cocycles.
Here we work with the cubical cohomology which is more tractable. 
If $\varphi$ is a (cubical) $\TT$-valued $2$-cocycle on  $\Lambda$, the twisted  $C^*$-algebra is 
denoted by $C^*_\varphi(\Lambda)$.

In \cite{KPS4} it is shown that the cubical and categorical cohomologies for a $k$-graph agree for $H^0 , H^1$ and $H^2$.  
An isomorphism between the two twisted versions of $k$-graph $C^*$-algebras compatible with the isomorphism in $H^2$ was also proven in \cite{KPS4}.

If $\beta$ is an automorphism of a $k$-graph $\Lambda$, then \cite{KPS3} gives a long exact sequence for the homology of the $(k+1)$-graph $\Lambda \times_\beta \ZZ$ in the categorical context. In this paper we describe the analogous cohomology sequence in the cubical context (see Proposition \ref{prop:les}) and use it to generalise the result in \cite{FPS2009} in three different ways.

Our main result, Theorem~\ref{thm:main} shows that if we twist the $C^*$-algebra of $\Lambda \times_\beta \ZZ$  by a 2-cocycle $\varphi$ then the resulting $C^*$-algebra $C^*_\varphi(\Lambda \times_\beta \ZZ)$ is isomorphic to the crossed product of a certain twisted $C^*$-algebra of $\Lambda$ (with twisting cocycle obtained by restricting $\varphi$) by an automorphism associated to $\beta$ and $\varphi$. 
Applying this result in different contexts, associated to the exact sequence outlined in 
Proposition~\ref{prop:les} yields Corollary~\ref{cor:qfa} 
which deals with the case where the class of the restriction of $\varphi$ is trivial; 
Corollary~\ref{cor:2} asserts that if $\psi$ is a $2$-cocycle on $\Lambda$ 
whose cohomology class is left invariant by $\beta$, then there is an automorphism of 
$C^*_\psi(\Lambda)$ which is compatible with $\beta$ for which the crossed product is
isomorphic to a twisted $C^*$-algebra of $\Lambda \times_\beta \ZZ$.
The case when $\beta$ is trivial which motivated this work is discussed in Corollary~\ref{cor:3}.

We conclude with a section of examples of twisted $k$-graph $C^*$-algebras arising as crossed products. In each case the twisted $k$-graph $C^*$-algebra lies in a classifiable class of $C^*$-algebras. 
In Example~\ref{ex:1} we consider quasifree automorphisms on Cuntz algebras and show how they arise in the setting 
of Corollary~\ref{cor:qfa}. 
In Example~\ref{ex:2} we use Theorem~\ref{thm:main} to compute the cohomology of a $2$-graph with infinitely many vertices which arises as a crossed product.
In Example~\ref{ex:3} we use other techiniques to compute the cohomology of a $3$-graph with one vertex which arises as a crossed product.  
We also consider a family of $2$-cocycles on the $3$-graph for which the associated $C^*$-algebra is isomorphic to $\mathcal{O}_2$.

\section{Background}
We start this section by giving some background on $k$-graphs, their cubical 
cohomology, 
and crossed-product graphs induced by automorphisms of $k$-graphs. We then 
prove the existence of a long exact sequence involving the cohomology groups 
of a 
$k$-graph and a crossed product graph. We finish with recalling the twisted 
$k$-graph $C^*$-algebras introduced in \cite{KPS3}.

\subsection{Higher-rank graphs}

We adopt the conventions of \cite{KP2000, kps2, pqr1} for $k$-graphs. Given a nonnegative integer $k$, a
\emph{$k$-graph} is a nonempty countable small category $\Lambda$ equipped with a functor $d
:\Lambda \to \NN^k$ satisfying the \emph{factorisation property}: for all $\lambda \in \Lambda$ and
$m,n \in \NN^k$ such that $d( \lambda )=m+n$ there exist unique $\mu ,\nu \in \Lambda$ such that
$d(\mu)=m$, $d(\nu)=n$, and $\lambda=\mu \nu$. When $d(\lambda )=n$ we say $\lambda$ has
\emph{degree} $n$. We will typically use $d$ to denote the degree functor in any $k$-graph in this
paper.

For $k \ge 1$, the standard generators of $\NN^k$ are denoted $e_1, \dots, e_k$, and for $n \in
\NN^k$ and $1 \le i \le k$ we write $n_i$ for the $i^{\rm th}$ coordinate of $n$. For $n = (n_1 ,
\ldots , n_k ) \in \NN^k$ let $| n | := \sum_{i=1}^k n_i$;  for $\lambda \in \Lambda$ we define $|
\lambda | := | d ( \lambda ) |$. For $m,n \in \NN^k$, we write $m \vee n$ for the coordinatewise
maximum of the two, and write $m \le n$ if $m_i \le n_i$ for $i=1 , \ldots , k$.

For $n \in \NN^k$, we write $\Lambda^n$ for $d^{-1}(n)$. The \emph{vertices} of $\Lambda$ are the
elements of $\Lambda^0$. The factorisation property implies that $o \mapsto \id_o$ is a bijection
from the objects of $\Lambda$ to $\Lambda^0$. We will frequently and without further comment use
this bijection to identify $\Obj(\Lambda)$ with $\Lambda^0$. The domain and codomain maps in the
category $\Lambda$ then become maps $s,r : \Lambda \to \Lambda^0$. More precisely, for $\alpha
\in\Lambda$, the \emph{source} $s(\alpha)$ is the identity morphism associated with the object
$\dom(\alpha)$ and similarly, $r(\alpha) = \id_{\cod(\alpha)}$. An \textit{edge} is a morphism $f$
with $d(f) = e_i$ for some $i=1, \ldots , k$.

Let $\lambda$ be an element of a $k$-graph $\Lambda$ and suppose $m,n \in \NN^k$ satisfy $0 \le m
\le n \le d(\lambda)$. By the factorisation property there exist unique elements $\alpha, \beta,
\gamma \in \Lambda$ such that
\[
\lambda = \alpha\beta\gamma, \quad d(\alpha) = m,
\quad d(\beta) = n - m,\quad\text{and}\quad d(\gamma) = d(\lambda) - n.
\]
We define $\lambda(m,n) := \beta$. In particular $\alpha = \lambda(0, m)$ and $\gamma = \lambda(n,
d(\lambda))$.

For $\alpha,\beta\in\Lambda$ and $E \subset \Lambda$, we write $\alpha E$ for $\{\alpha\lambda :
\lambda \in E, r(\lambda) = s(\alpha)\}$ and $E\beta$ for $\{\lambda\beta : \lambda \in E,
s(\lambda) = r(\beta)\}$. So for $u,v \in \Lambda^0$, we have $uE = E \cap r^{-1}(u)$, $E v = E
\cap s^{-1}(v)$ and $uEv = uE \cap Ev$.


Suppose that  $\Lambda$ is \emph{row-finite with no sources}, that is, for all $v \in \Lambda^0$ and $n \in \NN^k$ we have $0 < | v \Lambda^n | < \infty$.  
By \cite[Remark A.3]{LS},  $\Lambda$ is \emph{cofinal} if for all $v , w \in \Lambda^0$ there is $N \in \NN^k$ such that for all $\alpha \in v \Lambda^N$ we have $w \Lambda s (\alpha) \neq \emptyset$.
And by \cite[Lemma 3.2 (iv)]{RoSi}, $\Lambda$ is \textit{aperiodic} (or satisfies the \textit{aperiodicity condition}) if for every $v\in{\Lambda}^0$ and each pair $m \ne n\in\mathbb{N}^k$, 
there is $\lambda\in v\Lambda$ such that $d(\lambda)\ge m\vee n$ and
\begin{equation} \label{eq:lp}
\lambda(m,m+d(\lambda)-(m\vee n))\ne \lambda(n,n+d(\lambda)-(m\vee n)).
\end{equation}

A $k$-graph $\Lambda$ can be visualized by its \emph{$1$-skeleton}: This is a directed graph $E_\Lambda$ with vertices $\Lambda^0$ and edges $\cup_{i=1}^k \Lambda^{e_i}$ which have range and source in $E_\Lambda$ determined by their range and source in $\Lambda$.
Each edge in $E_\Lambda$ with degree $e_i$ is assigned the same colour, so $E_\Lambda$ is a coloured graph. It is common to call edges with degree $e_1$ in $\Lambda$ blue edges in $E_\Lambda$ and draw them with solid lines; edges with degree $e_2$ in $\Lambda$ are then called red edges and are drawn as dashed lines. 
In practice, along with the $1$-skeleton we give a collection of commuting squares or factorisation rules which relate the edges of $E_\Lambda$ that occur in the factorisation of morphisms of degree
$e_i+e_j$ ($i \neq j$) in $\Lambda$. 
For more information about $1$-skeletons we refer the reader to \cite{RSY2003}.

A functor $\beta : \Lambda \to \Gamma$ between $k$-graphs is a \emph{$k$-graph morphism} if it preserves degree, that is $d_\Gamma \circ \beta = d_\Lambda$. 
If $\Gamma = \Lambda$ and $\beta$ is invertible then $\beta$ is an \emph{automorphism}. 
The collection $\operatorname{Aut} \Lambda$ of automorphisms of $\Lambda$ forms a group under composition.


Let $T_1$ be the category $\NN$ regarded as a $1$-graph with degree functor given by the identity map. 

\subsection{Cubical cohomology of $k$-graphs}\label{ssec:cub coh}

For $k \ge 0$ define $\mathbf{1}_k := \sum^k_{i=1} e_i \in \NN^k$. 

\begin{dfn}
Let $\Lambda$ be a $k$-graph. For $r \ge 0$ let
$Q_r ( \Lambda ) = \{ \lambda \in \Lambda : d ( \lambda ) \le \mathbf{1}_k , |\lambda| = r \}$. 
\end{dfn}

\noindent
We have $Q_0(\Lambda) = \Lambda^0$, $Q_1 ( \Lambda ) = \bigcup_{i=1}^k \Lambda^{e_i}$ the set of edges in $\Lambda$ and $Q_r(\Lambda) = \emptyset$ if $r > k$. For $0 < r \le k$
the set $Q_r(\Lambda)$ consists of the morphisms in $\Lambda$ which may be expressed as the composition of a sequence of $r$ edges
with distinct degrees. We regard elements of $Q_r(\Lambda)$ as unit $r$-cubes in the sense that each one gives rise to a
commuting diagram of edges in $\Lambda$ shaped like an $r$-cube. In particular, when $r \ge 1$, each element of
$Q_r(\Lambda)$ has $2r$ faces in $Q_{r-1}(\Lambda)$ defined as follows.

\begin{dfn}
Fix $\lambda \in Q_r(\Lambda)$ and write $d( \lambda ) = e_{i_1} + \cdots + e_{i_r}$ where $i_1 < \cdots < i_r$. For $1
\le j \le r$, define $F_j^0(\lambda)$ and $F_j^1(\lambda)$ to be the unique elements of $Q_{r-1} ( \Lambda )$ such that
there exist $\mu,\nu \in \Lambda^{e_{i_j}}$ satisfying
\[
F^0_j(\lambda)\nu = \lambda = \mu F^1_j(\lambda).
\]
\end{dfn}

\noindent
In \cite{KPS3} the cubical homology of $\Lambda$ is identified with the homology of the complex 
$(\ZZ Q_*, \partial_*)$ where the boundary map $\partial_r : \ZZ Q_r \to \ZZ Q_{r-1}$ is determined by
\[
\partial_r \lambda =
\sum^{r}_{j=1}\sum_{\ell=0}^1 (-1)^{j+\ell} F_j^\ell(\lambda).
\]
\begin{rmk}
If $\beta \in \Aut ( \Lambda )$, then it is straightforward to check that the induced action of $\beta$ on $\ZZ Q_r$ commutes with $\partial_r$.
We first observe that
\[
F^0_j ( \beta \lambda ) = \beta  F^0_j ( \lambda ) \text{ and }
F^1_j ( \beta \lambda ) = \beta  F^1_j ( \lambda );
\]
\noindent and hence
\[
\partial_r (\beta \lambda) =
\sum^{r}_{j=1}\sum_{\ell=0}^1 (-1)^{j+\ell} F_j^\ell( \beta \lambda) = 
\sum^{r}_{j=1}\sum_{\ell=0}^1 (-1)^{j+\ell} \beta F_j^\ell(\lambda)=\beta \partial_r (\lambda) .
\]
\end{rmk}

\begin{ntn}
Let $\Lambda$ be a $k$-graph and let $A$ be an abelian group. For $r \ge 0$, we write
$\Ccub{r}(\Lambda, A)$ for the collection of all functions $f : Q_r(\Lambda) \to A$. 
Identify $\Ccub{r}(\Lambda, A)$ with $\Hom(\ZZ Q_r (\Lambda), A)$ in the usual way. 
Define maps $\dcub{r} : \Ccub{r}(\Lambda,A) \to \Ccub{r+1}(\Lambda,A)$ by
\[
\dcub{r}(f)(\lambda) := f(\partial_{r+1}(\lambda)) =
\sum^{r+1}_{j=1}\sum_{\ell=0}^1 (-1)^{j+\ell}f(F_j^\ell(\lambda)).
\]
Then $( \Ccub{*}(\Lambda, A), \dcub{*})$ is a cochain complex.
\end{ntn}

\begin{dfn}
We define the \emph{cubical cohomology} $\Hcub*(\Lambda, A)$ of the $k$-graph $\Lambda$ with coefficients
in $A$ to be the cohomology of the complex $(\Ccub*(\Lambda,A),\delta^*)$; that is $\Hcub{r}(\Lambda,A) :=
\ker(\dcub{r})/\image(\dcub{r-1})$. For $r \ge 0$, we write $\Zcub{r}(\Lambda,A) := \ker(\dcub{r})$
for the group of $r$-cocycles, and for $r > 0$, we write $\Bcub{r}(\Lambda,A) = \image(\dcub{r-1})$
for the group of $r$-coboundaries.
\end{dfn}

\begin{rmk}
For each $0 \le r \le k$ we define $\beta^* : C^r ( \Lambda , A ) \to C^r (  
\Lambda , A )$ by $\beta(f)=f\circ\beta$. For each $f \in C^r ( \Lambda , A )$ 
and $\lambda\in Q_r(\Lambda)$ we have
\[
\delta^r \beta^*(f) ( \lambda ) 
= \sum_{j=1}^{r+1}\sum_{\ell=0}^1 f(F_j^\ell(\beta(\lambda)))=\delta^r  
(f)(\beta\lambda)=\beta^*\delta^r(f)( \lambda ),
\]
and so $\beta^* \circ \delta^r = \delta^r \circ \beta^*$. Hence 
$\beta^*$ 
induces a homomorphism $\beta^* : 
H^* ( \Lambda  , A) \to H^* ( \Lambda , A )$.
\end{rmk}

\subsection{Crossed product graphs}\label{ssec:crossed}

Recall from
\cite{FPS2009} that if $\Lambda$ is a row-finite $k$-graph with no sources and $\beta \in \Aut  \Lambda$, then there is a
$(k+1)$-graph $\Lambda \times_\beta \ZZ$\label{pg:cpgraph} with morphisms $\Lambda \times \NN$, range and source
maps given by $r(\lambda,n) = (r(\lambda),0)$, $s(\lambda,n) = (\beta^{-n}(s(\lambda)), 0)$, degree map given by
$d(\lambda,n) = (d(\lambda),n)$ and composition given by $(\lambda,m)(\mu,n) := (\lambda\beta^m(\mu), m+n)$. Evidently, $\Lambda \times_\beta \ZZ$ is also row-finite with no sources and $( \Lambda \times_\beta \ZZ )^0 = \Lambda^0 \times \{ 0 \}$.

\begin{rmk} \label{rmk:betatriv}
If $\beta=\id$, note that $\Lambda\times_\beta\ZZ=\Lambda\times T_1$.
\end{rmk}

Recall from \cite[\S 4]{KPS3} that we may describe the $r$-cubes of $\Lambda \times_\beta \ZZ$ in terms of the cubes of $\Lambda$.
The $0$-cubes are given 
by $Q_0 ( \Lambda \times_\beta \ZZ ) = Q_0 ( \Lambda ) \times \{ 0 \}$. For 
each $0 \le r \le k$ the $(r+1)$-cubes are given by
\[
Q_{r+1}(\Lambda\times_\beta\ZZ)=\{(\lambda,1):\lambda\in 
Q_r(\Lambda)\}\cup\{(\lambda,0):\lambda\in Q_{r+1}(\Lambda)\}.
\]
Observe that for $\lambda \in Q_{r} ( \Lambda )$,  $F_j^\ell(\lambda , 0 ) = (F_j^\ell(\lambda), 0)$ and
\begin{equation}\label{eq: faces in cross graph}
F_j^\ell(\lambda , 1) =
\begin{cases}
(F_j^\ell(\lambda), 1) & \text{if } j \le r, \\
(\lambda, 0) &  \text{if } j = r+1, \ell = 0,\\
(\beta^{-1}(\lambda), 0) &  \text{if } j  = r+1, \ell = 1.
\end{cases}
\end{equation}
So for $f \in \Ccub{r}(\Lambda \times_\beta \ZZ, A)$, we have
\[
\dcub{r}(f)(\lambda, 0) =
 \sum^{r+1}_{j=1}\sum_{\ell=0}^1 (-1)^{j+\ell} f(F_j^\ell(\lambda), 0) \quad \text{for } \lambda \in Q_{r+1} ( \Lambda ) 
 \]
 and
 \begin{equation}\label{eq: delta r of f of lambda one}
\dcub{r}(f)(\lambda, 1) =
(-1)^{r+1}(f(\lambda, 0) - f(\beta^{-1}(\lambda), 0)) + \sum^{r}_{j=1}\sum_{\ell=0}^1 (-1)^{j+\ell} f(F_j^\ell(\lambda), 1), 
\end{equation}
for each $\lambda \in Q_{r} ( \Lambda )$.

\subsection{The long exact sequence of cohomology}

Suppose $\beta$ is an automorphism of a $k$-graph $\Lambda$. In 
\cite[Theorem~4.13]{KPS3} the authors presented a long exact sequence relating 
the homology groups of $\Lambda$ and $\Lambda\times_\beta\ZZ$. In the next 
result we present the corresponding long exact sequence of cohomology. 

\begin{prop} \label{prop:les}
Suppose $\beta$ is an automorphism of a $k$-graph $\Lambda$, and $A$ is an 
abelian group. There is a long exact sequence
\[
\begin{split}
    0 \longrightarrow &H^0 ( \Lambda \times_\beta \ZZ , A ) 
    \stackrel{i^*}{\longrightarrow}
        H^0 ( \Lambda  , A ) \xrightarrow{1-\beta^*} H^0 ( \Lambda , A ) 
        \stackrel{j^*}{\longrightarrow}
        H^1 (\Lambda \times_\beta \ZZ , A ) \stackrel{i^*}{\longrightarrow} 
        \cdots  \\
 & \cdots \stackrel{1-\beta^*}{\longrightarrow} H^r ( \Lambda , A ) 
 \stackrel{j^*}{\longrightarrow} H^{r+1} (\Lambda \times_\beta \ZZ , A )
        \xrightarrow{i^*} H^{r+1} (\Lambda , A ) 
        \stackrel{1-\beta^*}{\longrightarrow}
        H^{r+1} (\Lambda, A ) \stackrel{j^*}{\longrightarrow}  \cdots \\
        & \stackrel{j^*}{\longrightarrow} H^k ( \Lambda \times_\beta \ZZ , A ) 
        \stackrel{i^*}{\longrightarrow} H^k (\Lambda , A )
        \xrightarrow{1-\beta^*} H^k (\Lambda , A ) 
        \stackrel{j^*}{\longrightarrow}
        H^{k+1} (\Lambda \times_\beta \ZZ , A ) \longrightarrow 0,
\end{split}
\]
where 
\[
i^*(f)(\lambda)=f(\lambda,0)
\] 
for each $f\in Z^r(\Lambda\times_\beta\ZZ,A)$, and 
\[
j^*(f)(\lambda,0)=0\quad\text{ and }\quad j^*(f)(\lambda,1)=f(\lambda)
\]
for each $f\in Z^{r-1}(\Lambda,A)$.
\end{prop}

\begin{proof}
For each $0\le r\le k$ the maps $i: \ZZ Q_r (\Lambda)\to \ZZ Q_r (\Lambda\times_\beta\ZZ)$ and
$j: \ZZ Q_{r+1}(\Lambda\times_\beta\ZZ)\to \ZZ Q_r (\Lambda)$ determined by 
\begin{align*}
&& i(\lambda) &=(\lambda, 0)  && \text{for } \lambda \in Q_r (\Lambda) &&\\
&& j(\lambda,\ell)& =
\begin{cases}
0 & \text{if } \ell = 0 \\
\lambda & \text{if } \ell = 1
\end{cases} 
&& \text{for } (\lambda,\ell) \in Q_{r+1}(\Lambda\times_\beta\ZZ) &&
\end{align*}
induce maps $i^*: C^r(\Lambda\times_\beta\ZZ, A) \to C^r(\Lambda, A)$ and
$j^*: C^{r}(\Lambda, A) \to C^{r+1}(\Lambda\times_\beta\ZZ, A)$ given by
\begin{align*}
&& i^*(f)(\lambda) &=f(\lambda,0)  && \text{for } f \in C^r(\Lambda\times_\beta\ZZ, A) &&\\
&& j^*(f)(\lambda,\ell) &=
\begin{cases}
0&\text{if }\ell=0\\
f(\lambda)&\text{if }\ell=1
\end{cases}&& \text{for }f \in C^{r}(\Lambda, A). &&
\end{align*}
%
\noindent
Using the description of the cubes in $\Lambda\times_\beta\ZZ$ we obtain a 
short exact sequence of complexes $E$ where
\begin{equation}\label{eq: split short exact}
E_r:  0\to C^{r-1}(\Lambda, A)\stackrel{j^*}{\to} C^r(\Lambda\times_\beta\ZZ , A )\stackrel{i^*}{\to} C^r(\Lambda, A)\to 0,
\end{equation}
\noindent
since $i^*$ and $j^*$ commute with the coboundary maps. Indeed, 
\[
( \delta^r i^* ) f ( \lambda ) = \sum_{j=1,\ell=0}^{r+1,1} ( i^* f ) ( F_j^\ell ( \lambda ) ) 
= \sum_{j=1,\ell=0}^{r+1,1} f  ( F_j^\ell ( \lambda ) , 0 ) = \delta^r f ( \lambda , 0 ) =  ( i^*  \delta^r ) f ( \lambda )
\]

\noindent
for $f\in C^r(\Lambda\times_\beta\ZZ,A)$ and 
\begin{align*}
( \delta^{r}  j^* ) f ( \lambda , 1 )  &= (-1)^{r+1} \big( (j^* f ) ( \lambda , 0 ) - ( j^* f) ( \beta^{-1} ( \lambda ) , 0 ) \big) + \sum_{j=1,\ell=0}^{r,1} (-1)^{j+\ell} ( j^* f ) ( F^\ell_j ( \lambda ) , 1 ) \\
&= \sum_{j=1,\ell=0}^{r,1} (-1)^{j+\ell}  f  ( F^\ell_j ( \lambda ) )  = \delta^{r-1} f ( \lambda )  = (j^* \delta^{r-1} ) f ( \lambda , 1) , \text{ and  }
\end{align*}
\[
( \delta^{r}  j^* ) f ( \lambda , 0 )  = \sum_{j=1,\ell=0}^{r+1,1} (-1)^{j+\ell} ( j^* f) ( F^\ell_j ( \lambda ) , 0 ) \\
= 0 =  (j^* \delta^{r-1} ) f ( \lambda , 0 ) 
\]

\noindent
for $f\in C^{r-1}(\Lambda,A)$.

Using the long exact sequence associated to a short exact sequence of homology complexes 
(see \cite[Theorem II.4.1]{ML}) applied to a short exact sequence of cohomology complexes
with the appropriate reindexing, 
we obtain the long exact sequence
\[
\dots \stackrel{\delta^r_E}{\to} H^r(\Lambda,A) \stackrel{j^*}{\to} 
H^{r+1}(\Lambda\times_\beta\ZZ,A)\stackrel{i^*}{\to}H^{r+1}(\Lambda,A)
\stackrel{\delta^{r+1}_E}{\to}H^{r+1}(\Lambda,A)\to\dots.
\]  
Indeed, the boundary map $\delta_E$  (see \cite[II.4]{ML}) is defined as follows. Start with $m\in Z^r(\Lambda, A)$ and take the lift $n\in C^r(\Lambda\times_\beta\ZZ, A)$ given by $n(\lambda,0)=m(\lambda)$ and $n(\lambda,1)=0$.
Then $\delta^rn(\lambda,0)=0$ for all $\lambda$, so there is $c\in Z^r(\Lambda, A)$ such that $\delta^rn=j^*c$, that is $\delta^rn(\lambda, 1)=c(\lambda)$. 
Then $\delta_E$ takes the class of $m$ into the class of $c$. Using (\ref{eq: faces in cross graph}) and 
(\ref{eq: delta r of f of lambda one}) we get
\[
c(\lambda)=j^*(c)(\lambda,1)=\delta^rn(\lambda,1)
=(-1)^{r+1}(n(\lambda,0)-n(\beta^{-1}(\lambda),0))
=(-1)^{r+1}(m-m\circ\beta^{-1})(\lambda).
\]
Hence we see that $\delta^r_E=(-1)^{r+1}(1-(\beta^{-1})^*)$. By using a 
similar argument as in \cite[Theorem 4.13]{KPS3}, the sequence remains exact 
after replacing $\delta_E$ with $1-\beta^*$.
\end{proof}

\noindent
If $\beta = \id$, then as noted in Remark~\ref{rmk:betatriv} $\Lambda \times_\beta \ZZ$ may be identified with  $\Lambda \times T_1$
and $1-\beta^* = 0$.  Hence, for all $r$, we have the short exact sequence
\[
0 \rightarrow  H^r(\Lambda, A) \stackrel{j^*}{\longrightarrow} H^{r+1} (\Lambda \times T_1, A )
        \xrightarrow{i^*} H^{r+1} (\Lambda , A) \rightarrow 0.
\]
Moreover there is a map $\sigma: C^r(\Lambda, A)\to C^r(\Lambda \times T_1, A)$
such that $\sigma(f)(\lambda, 0)= f(\lambda)$ for $\lambda\in Q_{r}(\Lambda)$ and, if $r \ge 1$,
$\sigma(f)(\lambda, 1)= 0$ for $\lambda\in Q_{r-1}(\Lambda)$.  
It is straightforward to check that $\sigma$ intertwines boundary maps and that 
$i^*\sigma(f) = f$ for all $f \in C^r(\Lambda, A)$.   Hence, the map 
\begin{equation}\label{eq: Xi}
\Xi: (f, g) \in Z^{r}(\Lambda, A) \oplus Z^{r+1}(\Lambda, A) \mapsto 
j^*(f) + \sigma(g) \in Z^{r+1}(\Lambda \times T_1, A)
\end{equation}
is an isomorphism which intertwines the boundary maps.  We thereby obtain the following result:

\begin{cor}\label{cor:betatriv}
If $\beta = \id$, then $H^0(\Lambda \times T_1, A) \cong H^0(\Lambda, A)$ and 
for $r \ge 0$ the map on cohomology induced by $\Xi$ is an isomorphism 
\[
\Xi: H^{r}(\Lambda, A) \oplus H^{r+1}(\Lambda, A) \cong H^{r+1}(\Lambda \times_\beta \ZZ, A). 
\]
\end{cor}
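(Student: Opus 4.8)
The plan is to exploit the fact that, when $\beta=\id$, the short exact sequence of complexes $E$ from (\ref{eq: split short exact}) is split by the cochain map $\sigma$ constructed just before the statement, so that $\Xi$ realises $C^*(\Lambda\times T_1,A)$ as a direct sum of two (shifted) copies of $C^*(\Lambda,A)$ at the level of complexes; passing to cohomology then yields the result with essentially no further computation.

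First I would dispose of the degree-zero claim. Since $Q_0(\Lambda\times T_1)=\Lambda^0\times\{0\}$ is identified with $Q_0(\Lambda)=\Lambda^0$, the map $i^*$ is an isomorphism $C^0(\Lambda\times T_1,A)\to C^0(\Lambda,A)$ commuting with the coboundary maps. Because $1-\beta^*=0$, the initial segment $0\to H^0(\Lambda\times T_1,A)\xrightarrow{i^*}H^0(\Lambda,A)\xrightarrow{1-\beta^*}H^0(\Lambda,A)$ of the sequence in Proposition~\ref{prop:les} shows that $i^*$ is injective with image $\ker(1-\beta^*)=H^0(\Lambda,A)$, giving $H^0(\Lambda\times T_1,A)\cong H^0(\Lambda,A)$.

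For $r\ge 0$ I would first extend $\Xi$ to the full cochain groups, setting $\Xi(f,g)=j^*(f)+\sigma(g)$ for $(f,g)\in C^r(\Lambda,A)\oplus C^{r+1}(\Lambda,A)$. Using $i^*j^*=0$ and $i^*\sigma=\id$, a one-line diagram chase shows $\Xi$ is injective; and given $h\in C^{r+1}(\Lambda\times T_1,A)$ the element $h-\sigma(i^*h)$ lies in $\ker i^*=\image j^*$, which yields surjectivity. Thus $\Xi$ is a bijection. Since both $j^*$ and $\sigma$ intertwine the coboundary maps, $\Xi$ is in fact a chain isomorphism from the direct-sum complex $C^*(\Lambda,A)[1]\oplus C^*(\Lambda,A)$ (with $C^r(\Lambda,A)$ placed in degree $r+1$) onto $C^*(\Lambda\times T_1,A)$. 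As cohomology commutes with finite direct sums and with the degree shift, taking cohomology in degree $r+1$ gives
\[
H^{r+1}(\Lambda\times T_1,A)\cong H^r(\Lambda,A)\oplus H^{r+1}(\Lambda,A),
\]
with the isomorphism induced by $\Xi$, as required. Equivalently, one may argue entirely on cohomology: $\sigma$ descends to a section of $i^*$, so each short exact sequence $0\to H^r(\Lambda,A)\xrightarrow{j^*}H^{r+1}(\Lambda\times T_1,A)\xrightarrow{i^*}H^{r+1}(\Lambda,A)\to 0$ splits, and $\Xi$ is the resulting splitting.

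The only point requiring care is the bookkeeping of the degree shift together with the observation that the chain-level intertwining recorded before the statement is already enough: because the sequence splits, no connecting homomorphism intervenes. I therefore expect the main (and minor) obstacle to be simply confirming that $\Xi$ carries coboundaries to coboundaries, which is immediate once $\Xi$ is viewed as a chain map on the full complexes rather than merely as a map of cocycle groups.
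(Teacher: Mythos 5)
Your proposal is correct and takes essentially the same route as the paper: the paper likewise specialises Proposition~\ref{prop:les} with $1-\beta^*=0$ to get the $H^0$ statement and the short exact sequences, and uses the cochain-level section $\sigma$ of $i^*$ (intertwining coboundaries) to show that $\Xi = j^* + \sigma$ is an isomorphism compatible with the differentials. Your extension of $\Xi$ to the full cochain groups, the injectivity/surjectivity check via $i^*j^*=0$, $i^*\sigma=\id$ and $\ker i^*=\image j^*$, and the split-sequence remark simply make explicit what the paper records as ``straightforward to check.''
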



%
%

\subsection{Twisted $k$-graph $C^*$-algebras}

\begin{dfn}
\label{def:twisted CK}
Let $\Lambda$ be a row-finite  $k$-graph with no sources and fix $\varphi \in \Zcub2(\Lambda,\TT)$. A Cuntz-Krieger $\varphi$-representation of $\Lambda$ in a $C^*$-algebra $A$ is a set $\{p_v : v \in
\Lambda^0\} \subseteq A$ of mutually orthogonal projections and a set $\{s_\lambda : \lambda \in Q_1 ( \Lambda ) \} \subseteq A$ satisfying
\begin{itemize}
    \item[(TG1)] for al $1\le i\le k$ andl $\lambda \in \Lambda^{e_i}$, $s_\lambda^*s_\lambda =
        p_{s(\lambda)}$;
    \item[(TG2)] for all $1 \le i < j \le k$ and $\mu,\mu' \in \Lambda^{e_i}$,
        $\nu,\nu' \in \Lambda^{e_j}$ such that $\mu\nu = \nu'\mu'$,
    \[
        s_{\nu'} s_{\mu'} = \varphi(\mu\nu)s_\mu s_\nu;\text{ and}
    \]
    \item[(TG3)] for all $v \in \Lambda^0$ and all $i = 1, \dots, k$ such that
        $v\Lambda^{e_i} \not= \emptyset$,
    \[
        p_v =  \sum_{\lambda \in v\Lambda^{e_i}} s_\lambda s_\lambda^*.
    \]
\end{itemize}
\end{dfn}

\begin{dfn}\label{def:twisted algebra}
Let $\Lambda$ be a row-finite  $k$-graph with no sources and let $\varphi \in \Zcub2(\Lambda,\TT)$. We define $C^*_\varphi(\Lambda)$ to be the universal $C^*$-algebra generated by a Cuntz-Krieger $\varphi$-representation of~$\Lambda$.
\end{dfn}

\begin{rmk}
If $\varphi \in \Zcub2 ( \Lambda , \TT )$ is the trivial cocycle, then $\varphi \in \Bcub2 ( \Lambda , \TT )$ and so by \cite[Proposition 5.3]{KPS4} and \cite[Proposition 5.6]{KPS4} we have $C_\varphi^* ( \Lambda ) \cong C^* ( \Lambda )$. 
\end{rmk}

\begin{rmk}
In the context of twisted $k$-graph $C^*$-algebras, we shall be particularly interested in the following part of the exact sequence for $A=\TT$
\[
\cdots \rightarrow H^1(\Lambda, \TT) \xrightarrow{1-\beta^*} H^1(\Lambda , \TT ) \stackrel{j^*}{\longrightarrow} H^2 ( \Lambda \times_\beta \ZZ , \TT ) \stackrel{i^*}{\longrightarrow} H^2 (\Lambda , \TT )
        \xrightarrow{1-\beta^*} H^2 (\Lambda , \TT ) \rightarrow 
          \cdots
\]
\end{rmk}
\section{Main results} \label{sec:main}
In this section we present our $C^*$-algebraic results. In our main result we 
generalise the isomorphism $C^*(\Lambda\times_\beta\ZZ)\cong 
C^*(\Lambda)\rtimes_{\tilde{\beta}}\ZZ$ from \cite[Theorem~3.4]{FPS2009} (in 
the case $l=1$) to the twisted setting. Note that for $A=\TT$ we use 
multiplicative notation; inverses are given by conjugation, and the identity 
element is $1\in \TT$.

\begin{thm} \label{thm:main}
Let $\Lambda$ be a row-finite  $k$-graph with no sources, 
let  $\beta \in \Aut ( \Lambda )$ and let $\varphi \in Z^2 ( \Lambda \times_\beta\ZZ , \TT )$. Then
\begin{enumerate}[(i)]
\item There is an automorphism $\beta_\varphi$ of $C^*_{i^* ( \varphi )} ( \Lambda )$ such that
\begin{equation} \label{eq:betavarphidef}
\beta_\varphi ( p_v ) = p_{\beta v} \text{ and } \beta_\varphi ( s_e ) = \varphi ( \beta e , 1 ) s_{\beta e} \text{ for all } v \in Q_0 ( \Lambda ) \text{ and } e \in Q_1 ( \Lambda ) .
\end{equation}
\item Let $( F_n )_{n \in \NN}$ be an increasing family of finite subsets of $\Lambda^0$ such that $\cup_{n \in \NN} F_n = \Lambda^0$. The sequence $( \sum_{v \in F_n} s_{(v,1)} )_{n \in \NN}$ converges strictly to a unitary $U \in \Mm C^*_{\varphi} ( \Lambda \times_\beta \ZZ )$ satisfying
\begin{equation} \label{eq:Udef}
U p_{(v,0)} U^* = p_{(\beta v , 0)} \text{ and } U s_{(e,0)} U^* = \varphi(\beta e,1)s_{( \beta e , 0 )} \text{ for all } v \in Q_0 ( \Lambda ) \text{ and } e \in Q_1 ( \Lambda ) .
\end{equation}
\item 
There is a homomorphism  
$\pi: C^*_{i^* ( \varphi )} ( \Lambda ) \to C^*_\varphi ( \Lambda \times_\beta 
\ZZ )$ which forms a covariant pair $(\pi,U)$ whose integrated form
$\pi \times U : C^*_{i^* ( \varphi )}(\Lambda) \rtimes_{\beta_\varphi} \ZZ \to 
C^*_{\varphi} ( \Lambda \times_\beta \ZZ )$ is an isomorphism.
\end{enumerate}
\end{thm}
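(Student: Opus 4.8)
The plan is to establish the three parts in order, using the universal property of twisted $k$-graph $C^*$-algebras at every stage. For part (i), I would verify that the elements $p_{\beta v}$ and $\varphi(\beta e,1)s_{\beta e}$ form a Cuntz--Krieger $i^*(\varphi)$-representation of $\Lambda$ inside $C^*_{i^*(\varphi)}(\Lambda)$. Relations (TG1) and (TG3) are routine since the scalars are unimodular and $\beta$ permutes $\Lambda^0$ and each $\Lambda^{e_i}$ bijectively. The crux is (TG2): given $\mu\nu=\nu'\mu'$ in $Q_2(\Lambda)$, I must check that the proposed images satisfy $\varphi$-twisted commutation with the cocycle $i^*(\varphi)=\varphi(\cdot\,,0)$. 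Expanding both sides and using that $\beta$ maps this commuting square to the commuting square $\beta\mu\,\beta\nu=\beta\nu'\,\beta\mu'$, the required identity reduces to a relation among the values $\varphi(\beta\mu\,\beta\nu,0)$, $\varphi(\beta e,1)$ for the four edges, and $i^*(\varphi)(\mu\nu)$. This identity is exactly the statement that $\delta^1 i^*$ and the twist are compatible on the relevant $3$-cube; concretely it should follow from the cocycle identity $\delta^2\varphi=0$ applied to the cube $(\mu\nu,1)\in Q_3(\Lambda\times_\beta\ZZ)$, unwound via the face formulas \eqref{eq: faces in cross graph}. Universality then yields the homomorphism $\beta_\varphi$, and applying the same construction to $\beta^{-1}$ (with the appropriately adjusted cocycle values) produces the inverse, so $\beta_\varphi$ is an automorphism.

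For part (ii), I would first check that each partial sum $\sum_{v\in F_n}s_{(v,1)}$ is a partial isometry and that the net is strictly Cauchy; strict convergence to a multiplier $U$ follows from the row-finiteness and no-sources hypotheses, which guarantee that the range and source projections of the $s_{(v,1)}$ sum appropriately over $\Lambda^0$. I would verify $U^*U=UU^*=1$ by computing $s_{(v,1)}^*s_{(w,1)}$ and $s_{(v,1)}s_{(w,1)}^*$ using (TG1) and (TG3) in $C^*_\varphi(\Lambda\times_\beta\ZZ)$, noting that $s(v,1)=(\beta^{-1}v,0)$ and $r(v,1)=(v,0)$, so the source projections partition the unit exactly as the $p_{(v,0)}$ do. The conjugation formulas \eqref{eq:Udef} are then a direct computation: $U p_{(v,0)} U^*$ and $U s_{(e,0)} U^*$ are evaluated by multiplying out the relevant products $s_{(v,1)}(\cdots)s_{(w,1)}^*$, and the scalar $\varphi(\beta e,1)$ emerges from the (TG2) relation applied to the commuting square in $\Lambda\times_\beta\ZZ$ built from $(e,0)$ and $(r(e),1)$, whose factorisation rule involves precisely the cocycle value $\varphi(\beta e,1)$ via \eqref{eq: faces in cross graph}.

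For part (iii), the key observation is that comparing \eqref{eq:betavarphidef} with \eqref{eq:Udef} shows $\pi\circ\beta_\varphi=\Ad U\circ\pi$ on generators, so $(\pi,U)$ is a covariant pair for the system $(C^*_{i^*(\varphi)}(\Lambda),\ZZ,\beta_\varphi)$. Here $\pi$ is the homomorphism induced by sending the generating $i^*(\varphi)$-representation of $\Lambda$ to $\{p_{(v,0)}\}$ and $\{s_{(e,0)}\}$ in $C^*_\varphi(\Lambda\times_\beta\ZZ)$; that these satisfy the $i^*(\varphi)$-relations is immediate from the $r=1$ case of the face formulas, since $F^\ell_j(\lambda,0)=(F^\ell_j(\lambda),0)$ forces the degree-zero generators to obey exactly the restricted cocycle. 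The integrated form $\pi\times U$ then maps the crossed product into $C^*_\varphi(\Lambda\times_\beta\ZZ)$. To see it is an isomorphism I would construct an inverse directly: the elements $s_{(e,0)}$, $p_{(v,0)}$, together with the canonical unitary implementing $\ZZ$ in the crossed product, generate a Cuntz--Krieger $\varphi$-representation of the $(k+1)$-graph $\Lambda\times_\beta\ZZ$, since every generator $s_{(e,1)}$ can be written as $U^* s_{(e,0)}$ (up to a unimodular scalar) and the $(k+1)$-graph relations factor through the degree-$e_{k+1}$ generators $s_{(v,1)}$ realised inside the crossed product. Universality of $C^*_\varphi(\Lambda\times_\beta\ZZ)$ produces the inverse homomorphism, and checking the two composites are the identity on generators completes the proof.

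The main obstacle I anticipate is the verification in part (i) that the twisted relation (TG2) is preserved, and its mirror in the conjugation formula in part (ii): in both places one must correctly track how the cocycle $\varphi$ on the $(k+1)$-graph restricts and interacts with the degree-$(k+1)$ direction. The scalar $\varphi(\beta e,1)$ appearing in both \eqref{eq:betavarphidef} and \eqref{eq:Udef} is not arbitrary but is forced by the cocycle identity $\delta^2\varphi=0$ evaluated on the $3$-cubes of $\Lambda\times_\beta\ZZ$ of the form $(\lambda,1)$ with $\lambda\in Q_2(\Lambda)$; getting the signs and the $\beta^{-1}$ shift in \eqref{eq: delta r of f of lambda one} exactly right, so that all the twisting scalars cancel to yield genuine covariance, is where the real bookkeeping lies. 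Once these compatibility identities are pinned down, the universal-property arguments producing $\beta_\varphi$, $\pi$, and the inverse of $\pi\times U$ are formal.
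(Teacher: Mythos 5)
Your proposal is correct and follows the paper's proof essentially step for step: part (i) by verifying the twisted Cuntz--Krieger relations for $\{p_{\beta v},\,\varphi(\beta e,1)s_{\beta e}\}$ with (TG2) reduced to the cocycle identity $\dcub{2}(\varphi)=1$ evaluated on the $3$-cubes of the form $(\beta\lambda,1)$ (your cube $(\mu\nu,1)$ gives the same relation after the relabelling $\lambda\mapsto\beta\lambda$) and the inverse obtained from the analogous representation for $\beta^{-1}$; part (ii) by the same partial-isometry computation $U_n^*U_n=P(\beta^{-1}(F_n))$, $U_nU_n^*=P(F_n)$ with eventual constancy of products against generators; and part (iii) by the same covariant pair and the same inverse, built from the $\varphi$-representation of $\Lambda\times_\beta\ZZ$ in the crossed product with $S_{(v,1)}:=i_\Lambda(p_v)i_\ZZ(1)$. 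The only blemishes are notational and harmless: the mixed commuting square through $(e,0)$ is $(\beta e,1)=(\beta r(e),1)(e,0)=(\beta e,0)(\beta s(e),1)$, not one through $(r(e),1)$, and the degree-$e_{k+1}$ generators are the $s_{(v,1)}$ for $v\in\Lambda^0$ (your ``$s_{(e,1)}$'' is a $2$-cube, not an edge).
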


\begin{rmk}\label{rem: the cube}
In the proof of this theorem we need to calculate the faces of a cube 
$(\beta\lambda,1)\in Q_3(\Lambda\times_\beta\ZZ)$, where $\lambda\in 
Q_2(\Lambda)$. Suppose $\lambda=ef=f'e'$, where $e,e' \in \Lambda^{e_i}$ and 
$f,f' \in \Lambda^{e_j}$ such that $1 \le i < j \le k$. We can factorise 
$(\beta\lambda,1)$ according to the following diagram, and then calculate its 
faces.

\vspace{0.2cm}

\begin{minipage}[c]{8cm}
\[
\begin{tikzpicture}
    \def\vertex(#1) at (#2,#3){
        \node[inner sep=0pt, circle, fill=black] (#1) at (#2,#3)
        [draw] {.};
    }
    \vertex(11) at (0, -1)
    
    \vertex(21) at (3, -1)
    
    \vertex(31) at (0,2)
    
    \vertex(41) at (3,2)
    
    \vertex(51) at (1.5, 0.5)
    
    \vertex(61) at (4.5, 0.5)
    
    \vertex(71) at (1.5, 3.5)
        
    \vertex(81) at (4.5, 3.5);

%
%
%
%
%
%
%

    
    \draw[style=semithick,color=blue, -latex] (21.west)--(11.east)
    node[pos=0.5,anchor=north,inner sep=0pt] {\tiny$(\beta e,0)$};
    \draw[style=semithick,color=blue, -latex] (41.west)--(31.east)
    node[pos=0.3,anchor=south,inner sep=0pt] {\tiny$(\beta e',0)$};
    \draw[style=semithick,color=blue, -latex] (61.west)--(51.east)
    node[pos=0.65,anchor=north,inner sep=0pt] {\tiny$(e,0)$};
    \draw[style=semithick,color=blue, -latex] (81.west)--(71.east)
    node[pos=0.5,anchor=south,inner sep=0pt] {\tiny$(e',0)$};
    
    \draw[style=semithick,color=red, style=dashed, -latex] 
    (31.south)--(11.north)
    node[pos=0.35,anchor=east,inner sep=0pt] {\tiny$(\beta f',0)\,$};
    \draw[style=semithick,color=red, style=dashed, -latex] 
        (41.south)--(21.north)
        node[pos=0.35,anchor=west,inner sep=0pt] {\tiny$\,(\beta f,0)$};
    \draw[style=semithick,color=red, style=dashed, -latex] 
        (71.south)--(51.north)
        node[pos=0.65,anchor=east,inner sep=0pt] {\tiny$(f',0)\,$};
    \draw[style=semithick,color=red, style=dashed, -latex] 
        (81.south)--(61.north)
        node[pos=0.65,anchor=west,inner sep=0pt] {\tiny$\,(f,0)$};

    \draw[style=semithick,color=green!50!black, style=dashdotted, -latex] 
            (51.south west)--(11.north east)
            node[pos=0.35,anchor=north west,inner sep=0pt] 
                            {\tiny$\,(\beta r(f'),1)$};
    \draw[style=semithick,color=green!50!black, style=dashdotted, -latex] 
                (61.south west)--(21.north east)
                node[pos=0.35,anchor=north west,inner sep=0pt] 
                {\tiny$\,(\beta r(f),1)$};
   \draw[style=semithick,color=green!50!black, style=dashdotted, -latex] 
               (71.south west)--(31.north east)
               node[pos=0.5,anchor=south east,inner sep=0pt] 
                                           {\tiny$\,(\beta s(f'),1)$};
   \draw[style=semithick,color=green!50!black, style=dashdotted, -latex] 
               (81.south west)--(41.north 
               east)
               node[pos=0.5,anchor=south east,inner sep=0pt] 
                                           {\tiny$\,(\beta s(f),1)$};

\end{tikzpicture}
\]
\end{minipage}
\begin{minipage}[c]{8cm}
\begin{align*}
F_1^0(\beta\lambda,1)&=(\beta f',1)\\
F_1^1(\beta\lambda,1)&=(\beta f,1)\\
F_2^0(\beta\lambda,1)&=(\beta e,1)\\
F_2^1(\beta\lambda,1)&=(\beta e',1)\\
F_3^0(\beta\lambda,1)&=(\beta\lambda,0)\\
F_3^1(\beta\lambda,1)&=(\lambda,0)
\end{align*}
\end{minipage}
\end{rmk}

\vspace{0.2cm}

\begin{proof}[Proof of Theorem~\ref{thm:main}]

Let $\lambda\in 
Q_2(\Lambda)$. Write $\lambda=ef=f'e'$, where $e,e' \in \Lambda^{e_i}$ and 
$f,f' \in \Lambda^{e_j}$ such that $1 \le i < j \le k$. Using (\ref{eq: delta 
r of f of lambda one}) and the identities in Remark~\ref{rem: the cube} we get
\begin{equation}\label{eq:cohrel}
1 = \dcub{2}(\varphi)(\beta\lambda,1) =
\overline{\varphi(\beta\lambda,0)}\varphi(\lambda,0)\overline{\varphi(\beta f',1)}\varphi(\beta e,1)
\overline{\varphi(\beta e',1)}\varphi(\beta f,1).
\end{equation} 
For each $v \in Q_0 ( \Lambda )$ let $P_v := p_{\beta v}$ and for each $e \in 
Q_1 ( 
\Lambda )$ let $S_e := \varphi ( \beta e,1) s_{\beta e}$. We claim that $\{ P, 
S \}$ defines a $i^* ( \varphi )$-representation of $\Lambda$ in $C^*_{i^* ( 
\varphi )} ( 
\Lambda )$. We check condition (TG2) using (\ref{eq:cohrel}):
\begin{align*}
S_{f'} S_{e'} &= \varphi ( \beta f' , 1 ) \varphi ( \beta e' , 1 ) s_{\beta f'} s_{\beta e'} \\
&= \varphi ( \beta f' , 1 ) \varphi ( \beta e' , 1 ) i^* ( \varphi ) ( \beta e \beta f) s_{\beta e} s_{\beta f} \\
&= \varphi ( \beta f' , 1 ) \varphi ( \beta e' , 1 )  \varphi ( \beta (ef), 0 ) s_{\beta e} s_{\beta f} \\
&= \varphi ( \beta f' , 1 ) \varphi ( \beta e' , 1 )  \varphi ( \beta (ef), 0 )  \overline{\varphi( \beta e,1)\varphi( \beta f,1)} S_e S_f \\
&= \varphi (ef,0) S_e S_f =  i^* ( \varphi ) ( ef  ) S_e S_f.
\end{align*}
Conditions (TG1) and (TG3) follow easily. The universal 
property of $C_{i^* ( \varphi)}^* ( \Lambda)$ now gives a homomorphism 
$\beta_\varphi$ satisfying $\beta_\varphi(p_v)=p_{\beta v}$ and 
$\beta_\varphi(s_e)=\varphi(\beta e,1)s_{\beta e}$. Similar calculations show 
that the collection $\{p_{\beta^{-1}v},\overline{\varphi(e,1)}s_{\beta^{-1}e}\}$ is also an $i^*(\varphi)$-representation of $\Lambda$ 
in $C_{i^*(\varphi)}^*(\Lambda)$, and the corresponding homomorphism coming 
from the universal property of $C_{i^*(\varphi)}^*(\Lambda)$ is the inverse of 
$\beta_\varphi$. So $\beta_\varphi$ is an automorphism, and (i) holds.

For any finite subset $F\subseteq \Lambda^0$ we denote by $P(F):=\sum_{v\in 
F}p_{(v,0)}\in C^*(\Lambda\times_\beta\ZZ)$. To see that (ii) holds, first let 
$( F_n )_{n \in \NN}$ be an increasing sequence of finite subsets of 
$\Lambda^0$ such that $\cup_{n \in \NN} F_n = \Lambda^0$. Then a standard 
argument shows that $P ( F_n ) \to 1$ strictly in $ \Mm C^*_{\varphi} ( 
\Lambda \times_\beta \ZZ )$. For $n \ge 
1$ let $U_n := \sum_{v \in F_n} s_{(v,1)}$. Since the elements in the sum 
defining $U_n$ have the same degree, by (TG1) we have
\begin{equation} \label{eq:undom}
U_n^* U_n = \sum_{v,w \in F_n} s_{(v,1)}^* s_{(w,1)}= \sum_{v \in F_n} s_{(v,1)}^* s_{(v,1)} 
= \sum_{v \in F_n} p_{(\beta^{-1} v,0)} = P( \beta^{-1} (F_n) ) ,
\end{equation}

\noindent
and by (TG3) we have
\begin{equation} \label{eq:unran}
U_n U_n^* =  \sum_{v,w \in F_n} s_{(v,1)} s_{(w,1)}^*= \sum_{v \in F_n} s_{(v,1)} s_{(v,1)}^* = \sum_{v \in F_n} p_{(v,0)} = P ( F_n ).
\end{equation}

\noindent  Hence $U_n$ is a partial isometry, with initial projection $P( 
\beta^{-1} (F_n))$ and final projection $P(F_n)$.

 For $(w,0) \in ( \Lambda \times_\beta \ZZ )^0$ and $(e,0) \in ( \Lambda \times_\beta \ZZ )^{e_{j}}$, $1 \le j \le k$ we have
\begin{equation} \label{eq:unleft}
\begin{array}{rl}
U_n p_{(w,0)} &= s_{(\beta w,1)} \text{ if } \beta w \in F_n , \text{ and  
zero otherwise, and} \\
U_n s_{(e,0)} &= s_{( \beta r (  e) , 1 )} s_{(e,0)} \text{ if } \beta r(e) 
\in F_n ,  \text{ and  zero otherwise;} 
\end{array}
\end{equation}

\noindent and
\begin{equation} \label{eq:unright}
\begin{array}{rl}
p_{(w,0)} U_n &= s_{(w,1)} \text{ if }  w \in F_n \text{ and  zero otherwise, 
and} \\ 
s_{(e,0)} U_n &= s_{(e,0)} s_{( s ( e) , 1 )} \text{ if }  s(e) \in F_n ,  \text{ and  zero otherwise.}
\end{array}
\end{equation}

\noindent Hence $U_n$ multiplied on the left or right of any product of 
generators of $C_\varphi^* ( \Lambda \times_\beta \ZZ )$ is eventually 
constant as $n \to \infty$. A standard argument shows that $U_n$ converges 
strictly to an element $U\in \Mm C^*(\Lambda\times_\beta\ZZ)$. Moreover, $U$ 
is independent of the choice of $F_n$, and from \eqref{eq:unran} and 
\eqref{eq:undom} we see that $UU^*=1=UU^*$. Finally from \eqref{eq:unleft} and 
\eqref{eq:unright} it follows that for $w , \beta w \in F_n$ we have $U_n 
p_{(w,0)} = p_{(\beta w , 0 )} U_n$, and for $\beta r(e)  , \beta s(e) \in 
F_n$ we have
\[
U_n s_{(e,0)} = s_{(e,0)} s_{(\beta r(e),1)} = \varphi ( \beta e , 1 ) s_{(\beta e , 0)} s_{(\beta s(e) , 1 )} = \varphi ( \beta e , 1 ) s_{(\beta e ,0)} U_n
\]
We see that the identities \eqref{eq:Udef} hold by taking $n \to \infty$. This 
completes the proof of (ii).

For (iii) we first claim that $\{p_{(v,0)},s_{(e,0)}\}$ is an $i^* ( \varphi 
)$-representation of $\Lambda$ in $C^*_\varphi ( \Lambda \times_\beta \ZZ )$.
We check (TG2): for $e,e' \in \Lambda^{e_i}$ and $f,f' \in \Lambda^{e_j}$ such 
that $ef=f'e'$, where $1 \le i < j \le k$, we have
\[
s_{(f',0)} s_{(e',0)} = \varphi ( ef, 0 ) s_{(e,0)} s_{(f,0)}.
\]

\noindent 
Checking conditions (TG1) and (TG3) is straightforward. The universal property 
of $C_{i^* ( \varphi)}^* ( \Lambda)$ now gives a homomorphism $\pi : C^*_{i^* 
( \varphi )} ( \Lambda ) \to C^*_\varphi ( \Lambda \times_\beta \ZZ )$ 
satisfying $\pi ( p_v ) = p_{(v,0)}$ and $\pi ( s_e ) = s_{(e,0)}$.

The homomorphism $\pi$ and the unitary $U$ from (ii) satisfy
\[
U \pi ( s_e ) U^* = U s_{(e,0)} U^* = \varphi ( \beta e,1) S'_{\beta e} =  
\pi \big( \varphi ( \beta e,1) s_{\beta e} \big)= \pi \big( \beta_\varphi ( 
s_{e} ) \big),
\]
for each $e\in Q_1(\Lambda)$. It follows that $(\pi,U)$ is a covariant 
representation of $(C_{i^*(\varphi)}^*(\Lambda),\beta_\varphi)$, and hence by 
the universal property of the full crossed product 
$C_{i^*(\varphi)}^*(\Lambda)\rtimes_{\beta_\varphi}\ZZ$ we get a homomorphism 
$\pi\times U:C_{i^*(\varphi)}^*(\Lambda)\rtimes_{\beta_\varphi}\ZZ\to 
C_\varphi^*(\Lambda\times_\beta\ZZ)$. If we denote the universal covariant 
pair by $(i_\Lambda,i_{\ZZ}(1))$, then we know that $(\pi\times U)\circ 
i_\Lambda=\pi$ and $\overline{\pi\times U}(i_\ZZ(1))=U$, where 
$\overline{\pi\times U}$ is the extension of $\pi\times U$ to the multiplier 
algebra $\Mm C^*_{\varphi} ( \Lambda \times_\beta \ZZ )$.

We claim that $\pi\times U$ is an isomorphism. To find the inverse we 
construct a $\varphi$-representation of $\Lambda\times_\beta\ZZ$ in 
$C_{i^*(\varphi)}^*(\Lambda)\rtimes_{\beta_\varphi}\ZZ$. For each $(v,0) \in 
Q_0 ( \Lambda \times_\beta \ZZ )$ let $P_{(v,0)} := i_{\Lambda } ( p_v )$, for 
each $(e,0) \in Q_1 ( \Lambda \times_\beta \ZZ )$ let $S_{(e,0)} := 
i_{\Lambda} ( 
s_{e} )$, and for each $(v,1) \in Q_1 ( \Lambda \times_\beta \ZZ )$ let 
$S_{(v,1)} := i_\Lambda ( p_{v} ) i_\ZZ ( 1 )$. We claim that $\{ P, S 
\}$ is a Cuntz-Krieger $\varphi$-representation of $\Lambda \times_\beta \ZZ$ 
in $C_{i^* (\varphi )}^* ( \Lambda ) \rtimes_{\beta_\varphi} \ZZ$. To check 
(TG2) we have two cases to consider. For $(\beta e , 1 ) = ( \beta r(e), 1 ) ( 
e,0) = ( \beta e , 0 ) ( \beta s(e) , 1 ) \in ( \Lambda \times_\beta 
\ZZ)^{e_i+e_{k+1}}$ we have
\begin{align*}
S_{(\beta r(e),1)} S_{(e ,0)} &= i_\Lambda ( p_{\beta r(e)} ) i_\ZZ (1)  i_\Lambda ( s_{e } ) \\
&= i_\Lambda ( p_{\beta r(e)} ) i_\Lambda ( \beta_\varphi ( s_{e } ) ) i_\ZZ (1) \\
&= \varphi ( \beta e , 1) i_\Lambda ( p_{\beta r(e)} ) i_\Lambda ( s_{ \beta e 
} ) ) i_\ZZ (1)\\
&= \varphi ( \beta e , 1 ) i_\Lambda ( s_{ \beta e } ) i_\Lambda ( p_{\beta 
s(e)} ) i_\ZZ (1)\\
&= \varphi ( \beta e ,1 ) S_{(\beta e , 0)} S_{(\beta s(e), 1)}  .
\end{align*}
The other case is when $ef=f'e'$, where $e , e'  \in ( \Lambda \times_\beta 
\ZZ)^{e_i}$ and $f, f'  \in ( \Lambda \times_\beta \ZZ )^{e_j}$ and $1 \le i < 
j \le k$. Then $(ef,0)=(e,0)(f,0) = (f',0)(e',0) = (f'e',0)  \in \Lambda 
\times_\beta \ZZ$, and
\[
S_{(f',0)} S_{(e',0)} = i_\Lambda ( s_{f'} s_{e'} ) = i^* ( \varphi ) (ef ) i_\Lambda ( s_e s_f )   = \varphi (ef, 0 ) S_{(e,0)} S_{(f,0)} .
\]
Properties (TG1) and (TG3) follow more easily. The universal property of 
$C_{\varphi}^*( \Lambda \times_\beta \ZZ )$ now gives a homomorphism
$\rho_{P,S} : C_{\varphi}^*( \Lambda \times_\beta \ZZ ) \to C^*_{i^* ( \varphi )} ( \Lambda ) \rtimes_{\beta_\varphi} \ZZ$ such that
$\rho_{P,S} ( p_{(v,0)} ) = P_{(v,0)}$,  $\rho_{P,S} ( s_{(e,0)} ) = S_{(e,0)}$, and $\rho_{P,S} ( s_{(v,1)} ) = S_{(v,1)}$.
One checks on generators that $\rho_{P,S}$ is the inverse of $\pi \times U$.
\end{proof}

\begin{cor} \label{cor:qfa}
Let $\Lambda$ be a row-finite  $k$-graph with no sources, let
$\beta\in\Aut(\Lambda)$ and let $c\in Z^1(\Lambda,\TT)$. There is an automorphism 
$\beta^c$ of $\Lambda$ satisfying
\begin{equation}\label{eq: beta c}
\beta^c ( p_v ) = p_{\beta v} \text{ and } \beta^c ( s_e ) = c ( \beta e  ) 
s_{\beta e}, \text{ for all } v \in Q_0 ( \Lambda ) \text{ and } e \in Q_1 ( 
\Lambda ),
\end{equation}
and an isomorphism $C^*(\Lambda)\rtimes_{\beta^c}\ZZ\cong 
C_{j^*(c)}^*(\Lambda\times_\beta\ZZ)$.\end{cor}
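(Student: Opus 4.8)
The plan is to obtain this corollary as a direct specialisation of Theorem~\ref{thm:main}, taking the twisting cocycle on the crossed-product graph to be $\varphi := j^*(c)$. First I would record that since $c \in Z^1(\Lambda,\TT)$ and $j^*$ intertwines the coboundary maps (as established during the proof of Proposition~\ref{prop:les}), we have $j^*(c) \in Z^2(\Lambda\times_\beta\ZZ,\TT)$; thus $\varphi$ is a genuine $2$-cocycle on $\Lambda\times_\beta\ZZ$ and Theorem~\ref{thm:main} applies to it.

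The crux is then to identify the two ingredients that Theorem~\ref{thm:main} hands back to $\Lambda$, namely the restricted cocycle $i^*(\varphi)$ and the scalars $\varphi(\beta e,1)$. Using the defining formulas $j^*(f)(\lambda,0)=1$ and $j^*(f)(\lambda,1)=f(\lambda)$ (in multiplicative notation), I would compute
\[
i^*(\varphi)(\lambda) = \varphi(\lambda,0) = j^*(c)(\lambda,0) = 1 \quad \text{for every } \lambda \in Q_2(\Lambda),
\]
so that $i^*(\varphi)$ is the trivial cocycle. By the Remark following Definition~\ref{def:twisted algebra} this gives $C^*_{i^*(\varphi)}(\Lambda) \cong C^*(\Lambda)$. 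Likewise, $\varphi(\beta e,1) = j^*(c)(\beta e,1) = c(\beta e)$ for each $e \in Q_1(\Lambda)$.

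With these identifications, Theorem~\ref{thm:main}(i) supplies an automorphism $\beta_\varphi$ of $C^*_{i^*(\varphi)}(\Lambda) \cong C^*(\Lambda)$ with $\beta_\varphi(p_v)=p_{\beta v}$ and $\beta_\varphi(s_e)=c(\beta e)s_{\beta e}$; setting $\beta^c := \beta_\varphi$ yields exactly (\ref{eq: beta c}). Finally Theorem~\ref{thm:main}(iii) provides the isomorphism $C^*_{i^*(\varphi)}(\Lambda)\rtimes_{\beta_\varphi}\ZZ \cong C^*_\varphi(\Lambda\times_\beta\ZZ)$, which after substituting $i^*(\varphi)=1$, $\beta_\varphi=\beta^c$ and $\varphi=j^*(c)$ becomes $C^*(\Lambda)\rtimes_{\beta^c}\ZZ \cong C^*_{j^*(c)}(\Lambda\times_\beta\ZZ)$, as required.

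I do not anticipate any real obstacle: the substance is entirely contained in Theorem~\ref{thm:main}, and the only point to verify is the bookkeeping identity that $i^*(j^*(c))$ is trivial (equivalently, $\image(j^*)\subseteq\ker(i^*)$ at the cochain level), which is immediate from the definitions of $i^*$ and $j^*$ and is of course consistent with the exactness in Proposition~\ref{prop:les}.
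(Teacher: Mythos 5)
Your proposal is correct and takes essentially the same route as the paper's own proof: both apply Theorem~\ref{thm:main} with $\varphi := j^*(c)$, observe that $i^*(j^*(c))$ is the trivial cocycle and that $\varphi(\beta e,1)=c(\beta e)$, and then read off the automorphism $\beta^c=\beta_{j^*(c)}$ and the isomorphism from parts (i) and (iii). The only difference is that you make explicit two points the paper leaves implicit, namely that $j^*(c)\in Z^2(\Lambda\times_\beta\ZZ,\TT)$ because $j^*$ intertwines coboundaries, and that triviality of $i^*(\varphi)$ yields $C^*_{i^*(\varphi)}(\Lambda)\cong C^*(\Lambda)$.
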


\begin{proof} 
We apply Theorem~\ref{thm:main} with $\varphi:= j^*(c)\in Z^2(\Lambda\times_\beta\ZZ,\TT)$. 
Then $\beta^c$ is just $\beta_{j^*(c)}$, and (\ref{eq: beta c}) follows because $\varphi(e,1) 
= j^*(c)(e,1) = c(e)$ for all $e \in Q_1 ( \Lambda )$. The isomorphism 
$C^*(\Lambda)\rtimes_{\beta^c}\ZZ\cong C_{j^*(c)}^*(\Lambda\times_\beta\ZZ)$ 
follows by Theorem~\ref{thm:main} and realising that 
$i^*(\varphi)=i^*(j^*(c))=1$.
\end{proof}

\begin{cor} \label{cor:2}
Let $\Lambda$ be a row-finite $k$-graph with no sorces and let $\beta \in \Aut ( \Lambda )$. 
Suppose that $\psi \in  Z^2(\Lambda,\TT)$ such that $[\psi] \in \ker ( 1 - \beta^*)$.
Then there is $\varphi \in  Z^2(\Lambda \times_\beta \ZZ,\TT)$ such that $\psi = i^* ( \varphi )$
and so Theorem~\ref{thm:main} applies.
In particular there is an automorphism $\beta_\varphi$ of $C^*_{\psi} ( \Lambda )$ such that
\[
C^*_{i^* ( \varphi )}(\Lambda) \rtimes_{\beta_\varphi} \ZZ \cong 
C^*_{\varphi} ( \Lambda \times_\beta \ZZ ).
\]
\end{cor}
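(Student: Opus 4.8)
The plan is to manufacture the $2$-cocycle $\varphi$ from the exactness of the cohomology sequence of Proposition~\ref{prop:les} and then invoke Theorem~\ref{thm:main} essentially verbatim. First I would read off, from the portion of that sequence highlighted in the remark preceding this section, the segment $H^2(\Lambda\times_\beta\ZZ,\TT)\xrightarrow{i^*}H^2(\Lambda,\TT)\xrightarrow{1-\beta^*}H^2(\Lambda,\TT)$: exactness at the middle term gives $\image(i^*)=\ker(1-\beta^*)$. Since $[\psi]\in\ker(1-\beta^*)$ by hypothesis, there is a class $[\Phi]\in H^2(\Lambda\times_\beta\ZZ,\TT)$ with $i^*([\Phi])=[\psi]$; I fix a representing cocycle $\Phi\in Z^2(\Lambda\times_\beta\ZZ,\TT)$.

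The only genuine issue — which I expect to be the crux of the argument — is that the previous step yields merely that $i^*(\Phi)$ and $\psi$ are cohomologous, say $i^*(\Phi)=\psi\cdot\delta^1(g)$ for some $g\in C^1(\Lambda,\TT)$, whereas the corollary demands the strict cochain identity $i^*(\varphi)=\psi$. To upgrade the cohomological equality to an equality of cochains I would exploit that $i^*$ is already surjective at the cochain level: the short exact sequence of complexes \eqref{eq: split short exact} shows $i^*\colon C^1(\Lambda\times_\beta\ZZ,\TT)\to C^1(\Lambda,\TT)$ is onto, so $g$ admits a lift $\tilde g\in C^1(\Lambda\times_\beta\ZZ,\TT)$ with $i^*(\tilde g)=g$ (concretely $\tilde g(\lambda,0)=g(\lambda)$ and $\tilde g(\lambda,1)=1$). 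Setting $\varphi:=\Phi\cdot\overline{\delta^1(\tilde g)}$ and using that $i^*$ commutes with the coboundary maps, I get $i^*(\varphi)=i^*(\Phi)\cdot\overline{\delta^1(g)}=\psi\,\delta^1(g)\cdot\overline{\delta^1(g)}=\psi$, while $\delta^2\varphi=\delta^2\Phi\cdot\overline{\delta^2\delta^1(\tilde g)}=1$ since $\Phi$ is a cocycle and consecutive coboundaries vanish. Thus $\varphi\in Z^2(\Lambda\times_\beta\ZZ,\TT)$ is a $2$-cocycle, cohomologous to $\Phi$, with $i^*(\varphi)=\psi$ exactly.

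With this $\varphi$ the remaining assertions are immediate from Theorem~\ref{thm:main}. Since $i^*(\varphi)=\psi$ we have $C^*_{i^*(\varphi)}(\Lambda)=C^*_\psi(\Lambda)$, so part (i) produces the automorphism $\beta_\varphi$ of $C^*_\psi(\Lambda)$ and part (iii) produces the isomorphism $C^*_{i^*(\varphi)}(\Lambda)\rtimes_{\beta_\varphi}\ZZ\cong C^*_\varphi(\Lambda\times_\beta\ZZ)$. In short, all of the $C^*$-algebraic content is inherited from Theorem~\ref{thm:main}, and the entire work of the corollary is the homological passage from a class lying in $\image(i^*)=\ker(1-\beta^*)$ to an honest $2$-cocycle on $\Lambda\times_\beta\ZZ$ that restricts to $\psi$ on the nose.
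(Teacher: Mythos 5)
Your proof is correct, but it takes a genuinely different route from the paper's. You derive $\varphi$ abstractly: exactness of the sequence of Proposition~\ref{prop:les} at $H^2(\Lambda,\TT)$ gives a class $[\Phi]$ with $i^*[\Phi]=[\psi]$, and you then upgrade the cohomological equality to an on-the-nose equality $i^*(\varphi)=\psi$ by lifting the witnessing cochain $g$ through the cochain-level surjection $i^*$ of \eqref{eq: split short exact} and twisting $\Phi$ by $\overline{\delta^1(\tilde g)}$ — a clean homological-algebra manoeuvre that correctly uses $i^*\circ\delta=\delta\circ i^*$ and $\delta^2\delta^1=1$. The paper instead bypasses the long exact sequence entirely and builds $\varphi$ by hand: from $[\psi]=[\beta^*\psi]$ it chooses $b\colon Q_1(\Lambda)\to\TT$ with $(\beta^*\psi)\overline{\psi}=\delta^1(b)$ and defines $\varphi(\lambda,0)=\psi(\lambda)$, $\varphi(\lambda,1)=b(\beta^{-1}\lambda)$, verifying the cocycle identity directly on the $3$-cubes $(\beta\lambda,1)$ via the computation of \eqref{eq:cohrel}. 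The trade-off: the paper's construction is explicit — it exhibits a concrete lift, and in particular makes the induced automorphism transparent, since then $\beta_\varphi(s_e)=\varphi(\beta e,1)s_{\beta e}=b(e)s_{\beta e}$, which is what gets used in the examples — and it does not lean on the exactness claim of Proposition~\ref{prop:les} (whose final step, replacing $\delta_E$ by $1-\beta^*$, the paper only sketches by reference to \cite{KPS3}); your argument is shorter and more conceptual, but nonconstructive, and it inherits whatever burden of proof attaches to that exactness statement. In essence your coboundary-adjustment step is the unwound content of exactness at $H^2(\Lambda,\TT)$, so the two proofs are two presentations of the same underlying mechanism, and yours is a legitimate alternative.
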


\begin{proof}
Since $[\psi] \in \ker ( 1 - \beta^*)$,  $[\psi] = [\beta^*\psi]$ and so there is a map 
$b: Q_1 ( \Lambda ) \to \TT$ such that $(\beta^*\psi)\overline\psi = \dcub{1}(b)$.
So for $\lambda=ef=f'e'$ where $e,e' \in \Lambda^{e_i}$ and $f,f' \in \Lambda^{e_j}$ 
such that   $1 \le i < j \le k$, we get
\[
\psi(\beta\lambda)\overline{\psi(\lambda)} = b(e)b(f)\overline{b(f')b(e')}.
\]
We define $\varphi \in  Z^2(\Lambda \times_\beta \ZZ ,\TT)$ by
\[
\varphi(\lambda, \ell) = 
\begin{cases}
\psi(\lambda) & \text{if } \lambda \in Q_2(\Lambda), \ell = 0, \\
b(\beta^{-1}\lambda) & \text{if } \lambda \in Q_1(\Lambda), \ell = 1.
\end{cases}
\]
Then by a computation as in Equation (\ref{eq:cohrel}) we have $\dcub{2}(\varphi)(\beta\lambda,1) = 1$
for all $\lambda \in Q_2(\Lambda)$;  moreover,  for $\lambda \in Q_3(\Lambda)$, we have
$\dcub{2}(\varphi)(\lambda,0) = (\dcub{2}(\psi)(\lambda),0) = 1$ since $\psi \in  Z^2(\Lambda,\TT)$ .  
Hence, $\varphi \in  Z^2(\Lambda \times_\beta \ZZ,\TT)$.  By construction $\psi = i^* ( \varphi )$ and
so Theorem~\ref{thm:main} gives the result.
\end{proof}

\noindent Recall that if $\beta$ is the identity automorphism of $\Lambda$ then $\Lambda \times_\beta \ZZ \cong \Lambda \times T_1$ by Remark~\ref{rmk:betatriv} and $\Xi: Z^1(\Lambda, \TT)\oplus Z^2(\Lambda,\TT)\cong Z^2(\Lambda\times T_1,\TT)$ by equation (\ref{eq: Xi}). In this case Theorem~\ref{thm:main} reduces to the following result:

\begin{cor}\label{cor:3}
Let $\Lambda$ be a row-finite $k$-graph with no sources and
let $\varphi \in Z^2(\Lambda\times T_1,\TT)$.  Then $\varphi = \Xi(\varphi_1, \varphi_2)$  where
$(\varphi_1, \varphi_2) \in Z^{1}(\Lambda, \TT) \oplus Z^{2}(\Lambda, \TT)$.  Moreover,
\begin{enumerate}[(i)]
\item
We have $\varphi(e,1) = \varphi_1(e)$ for all $e\in Q_1(\Lambda)$ and 
$\varphi(\lambda, 0) = \varphi_2(\lambda)$ for all $\lambda \in Q_2(\Lambda)$.
\item There is an automorphism $\alpha=\alpha_\varphi$ of $C^*_{\varphi_2}(\Lambda)$ such that
\[
\alpha(p_v)=p_v \text{ and }  \alpha(s_e)=\varphi_1(e)s_e \text{ for all } v\in Q_0(\Lambda) \text{ and } 
e\in Q_1(\Lambda).
\]
\item There is an isomorphism $C^*_{\varphi_2}(\Lambda) \rtimes_\alpha\ZZ\to C_\varphi^*(\Lambda\times T_1)$.
\end{enumerate}
\end{cor}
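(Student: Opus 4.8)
The plan is to deduce the whole statement from Theorem~\ref{thm:main} specialised to $\beta = \id$, treating part (i) as the only genuine computation and parts (ii)--(iii) as immediate specialisations. First I would invoke Corollary~\ref{cor:betatriv} (equivalently the isomorphism $\Xi$ of (\ref{eq: Xi}) in the case $r=1$) to write the given $\varphi \in Z^2(\Lambda \times T_1, \TT)$ uniquely as $\varphi = \Xi(\varphi_1, \varphi_2) = j^*(\varphi_1)\,\sigma(\varphi_2)$ with $(\varphi_1, \varphi_2) \in Z^1(\Lambda, \TT) \oplus Z^2(\Lambda, \TT)$, the product being taken in $\TT$.

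To establish (i) I would simply read off the two components from the defining formulas for $j^*$ and $\sigma$. Evaluating at a generator $(e,1)$ with $e \in Q_1(\Lambda)$ gives $j^*(\varphi_1)(e,1) = \varphi_1(e)$ while $\sigma(\varphi_2)(e,1) = 1$, so $\varphi(e,1) = \varphi_1(e)$; evaluating at $(\lambda, 0)$ with $\lambda \in Q_2(\Lambda)$ gives $j^*(\varphi_1)(\lambda, 0) = 1$ and $\sigma(\varphi_2)(\lambda, 0) = \varphi_2(\lambda)$, so $\varphi(\lambda, 0) = \varphi_2(\lambda)$. This is precisely (i).

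For (ii) and (iii) I would apply Theorem~\ref{thm:main} with $\beta = \id$ to this $\varphi$. The key identification is that $i^*(\varphi)(\lambda) = \varphi(\lambda, 0) = \varphi_2(\lambda)$ for $\lambda \in Q_2(\Lambda)$, so $i^*(\varphi) = \varphi_2$ and hence $C^*_{i^*(\varphi)}(\Lambda) = C^*_{\varphi_2}(\Lambda)$. Theorem~\ref{thm:main}(i) then produces an automorphism $\beta_\varphi$ of $C^*_{\varphi_2}(\Lambda)$; setting $\alpha := \beta_\varphi$ and using $\beta = \id$ together with part (i) we get $\alpha(p_v) = p_{\beta v} = p_v$ and $\alpha(s_e) = \varphi(\beta e, 1) s_{\beta e} = \varphi(e,1) s_e = \varphi_1(e) s_e$, which is (ii). Finally, Theorem~\ref{thm:main}(iii) supplies the isomorphism $\pi \times U \colon C^*_{i^*(\varphi)}(\Lambda) \rtimes_{\beta_\varphi} \ZZ \to C^*_\varphi(\Lambda \times_\beta \ZZ)$; substituting $i^*(\varphi) = \varphi_2$, $\beta_\varphi = \alpha$, and $\Lambda \times_\beta \ZZ \cong \Lambda \times T_1$ (Remark~\ref{rmk:betatriv}) yields the desired isomorphism $C^*_{\varphi_2}(\Lambda) \rtimes_\alpha \ZZ \to C^*_\varphi(\Lambda \times T_1)$, giving (iii).

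There is no real obstacle here, since the statement is a direct corollary of Theorem~\ref{thm:main} once $\beta$ is taken trivial. The only points demanding care are bookkeeping ones: correctly matching the index conventions in the definitions of $j^*$, $\sigma$ and $\Xi$ so that the components of $\varphi$ come out as claimed in (i), and keeping the multiplicative convention for $\TT$ straight (the additive ``$+$'' in (\ref{eq: Xi}) becomes a product and the additive identity $0$ becomes $1$). Once (i) is in hand, the specialisation $\beta = \id$ collapses $p_{\beta v}$ to $p_v$ and replaces $\varphi(\beta e, 1)$ by $\varphi_1(e)$, so (ii) and (iii) require nothing beyond quoting Theorem~\ref{thm:main}.
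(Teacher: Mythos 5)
Your proposal is correct and follows essentially the same route as the paper, which offers no separate proof of Corollary~\ref{cor:3} but derives it, exactly as you do, by writing $\varphi = \Xi(\varphi_1,\varphi_2)$ via the isomorphism (\ref{eq: Xi}) and specialising Theorem~\ref{thm:main} to $\beta = \id$ (using Remark~\ref{rmk:betatriv} to identify $\Lambda \times_\beta \ZZ$ with $\Lambda \times T_1$). Your component computations for (i) --- evaluating $j^*(\varphi_1)$ and $\sigma(\varphi_2)$ at $(e,1)$ and $(\lambda,0)$, with the additive-to-multiplicative translation for $\TT$ --- and the identification $i^*(\varphi) = \varphi_2$ that drives (ii) and (iii) are precisely the bookkeeping the paper intends.
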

\section{Examples} 
We consider some examples of automorphisms and the associated crossed products.  
In Example~\ref{ex:1} we consider quasifree automorphisms on Cuntz algebras. 
In Examples~\ref{ex:2} and \ref{ex:3} we compute cohomology of the crossed product; in Example~\ref{ex:3} we adduce conditions under which the twisted crossed product $C^*$-algebra is simple and purely infinite and use classification results to show that it is isomorphic to $\mathcal{O}_2$.

\begin{example}\label{ex:1}
For $n > 1$ let $B_n$ denote the $1$-graph which is the path category of the directed graph with a single vertex $v$ and edges $f_1 , \ldots , f_n$. It is well-known that $C^* ( B_n ) \cong \mathcal{O}_n$. It is straightforward to see that $Z^1 ( B_n , \TT ) \cong \TT^n$, since we may label each edge $f_i$ with an independent element of $\TT$. It is also straightforward to see that $\Aut B_n$ is isomorphic to $S_n$, the symmetric group of order $n$, which acts by permuting the edges $f_i$.

Following \cite{E}, an  automorphism $\alpha$ of $\mathcal{O}_n$ is said to be \emph{quasifree} if it is determined by a unitary matrix $u \in U(n)$ in the following sense
\[
\alpha(s_{f_i}) = \sum_{j=1}^n u_{i,j} s_{f_j}
\text{ for } i =1 , \ldots , n .
\]
We write $\alpha = \alpha_u$.  
Given $u , u' \in U(n)$, we have $\alpha_{uu'} = \alpha_{u}\circ \alpha_{u'}$.
Moreover, if  $u , u' \in U(n)$ are conjugate, the corresponding  automorphisms
 $\alpha_{u}$, $\alpha_{u'}$ are conjugate. 

Evans notes on \cite[Page~917]{E} (citing an argument of Archbold from \cite{A}) that $\alpha_u$ is outer if and only if $u\neq 1$. Hence by \cite[Lemma 10]{KK} the crossed product $\Oo_n\rtimes_{\alpha_u}\ZZ$ is simple and purely infinite if and only if $u^m\neq 1$ for all $m\neq 0$. By the Pimsner-Voiculescu six-term exact sequence we have $K_i(\Oo_n\rtimes_{\alpha_u}\ZZ)=\ZZ/(n-1)\ZZ$ for $i=0,1$. Hence if $u^m\neq 1$ for all $m\neq 0$, the Kirchberg-Phillips Theorem \cite{Kirchberg,Ph} yields that the isomorphism class of $\Oo_n\rtimes_{\alpha_u}\ZZ$ is independent of $u$.

We consider the situation covered in Corollary~\ref{cor:qfa} in the case $\Lambda = B_n$.
If the action $\beta$ on $B_n$ is induced by the identity permutation and 
$c = ( c_1 , \ldots , c_n ) \in Z^1 ( B_n , \TT )$,
then the automorphism $\beta^c$ of Corollary~\ref{cor:qfa} is the quasifree automorphism $\alpha_u$ of 
$\mathcal{O}_n$ arising from the $n \times n$ diagonal matrix $u$ with entries determined by $c$ 
(see  \cite{K,E,Ki}). By Remark~\ref{rmk:betatriv} we have that $B_n \times_{\beta} \ZZ \cong B_n \times T_1$ and so by Corollary~\ref{cor:qfa} we have
\[
\mathcal{O}_n \rtimes_{\beta^c} \ZZ \cong C^*_{j^*(c)} ( B_n \times T_1 ).
\]
Moreover, $u^m\neq 1$ for all $m\neq 0$ if and only if $c_i$ is not a root of unity for some $1\le i\le n$, and hence in this case by the above paragraph we have $C^*_{j^*(c)} ( B_n \times T_1 )$ simple and purely infinite with $K_i(C^*_{j^*(c)} ( B_n \times T_1 ))=\ZZ/(n-1)\ZZ$ for $i=0,1$.

Now let $\alpha$ be a quasifree automorphism of $\mathcal{O}_n$; then $\alpha = \alpha_u$ 
for some unitary matrix $u \in U(n)$.  
Then since every unitary is conjugate to a diagonal unitary, $\alpha$ is conjugate to $\beta^c$ for some
$c \in Z^1 ( B_n , \TT )$ (where $\beta$ is the identity permutation).  
Hence, 
\[
\mathcal{O}_n \rtimes_{\alpha} \ZZ \cong \mathcal{O}_n \rtimes_{\beta^c} 
\ZZ \cong C^*_{j^*(c)} ( B_n \times T_1 ).
\]
Let $\gamma$ be an arbitrary permutation of the edges of $B_n$.  
Then the induced automorphism $\gamma$ of $\mathcal{O}_n$ is the quasifree automorphism associated to the unitary corresponding to the permutation.  
So if $c \in Z^1 ( B_n , \TT )$, then $\gamma^c = \beta^c \circ \gamma$ is also a quasifree automorphism.
%
%
\end{example}

\begin{example}  \label{ex:2}
Consider the infinite $1$-graph $\Lambda$ with vertices $v_n$ for $n\ge 1$ and edges $e_{nj}$, $n\ge 1, j=1,...,n$, where $s(e_{nj})=v_{n+1}$ and $r(e_{nj})=v_n$ for $j=1,...,n$. 
Then $C^*(\Lambda)$ is strongly Morita equivalent to the UHF-algebra with $K_0$-group isomorphic to $\QQ$. 
Let $\beta$ be the automorphism of $\Lambda$ which fixes the vertices and cyclically permutes the edges between two successive vertices, i.e. 
\[
\beta(e_{nj})=e_{n,j+1}, j=1,...,n-1, \beta(e_{nn})=e_{n1}.
\] 
The crossed product graph $\Lambda\times_\beta\ZZ$ has degree $(1,0)$ edges $e_{nj}'=(e_{nj},0)$ for $n\ge 1, j=1,...,n$ and it  has a degree $(0,1)$ loop $f_n=(v_n,1)$ at each vertex $v_n$ 
(we identify $(v_n, 0)$ with $v_n$), with commuting squares
\begin{equation} \label{eq:fp}
e_{nj}'f_{n+1}=f_ne_{n,j+1}', \text{ for }j=1,...,n-1, \quad e_{nn}'f_{n+1}=f_ne_{n1}'.
\end{equation}
It is a rank-$2$ Bratteli diagram (see \cite[Definition 4.1]{PRRS}) with $1$-skeleton shown below
\[
\begin{tikzpicture}
    \def\vertex(#1) at (#2,#3){
        \node[inner sep=0pt, circle, fill=black] (#1) at (#2,#3)
        [draw] {.};
    }
    \vertex(11) at (0, 1)
    
    \vertex(21) at (2, 1)
    
    \vertex(31) at (4, 1)
    
    \vertex(41) at (6, 1)
    
    \node at (7, 1) {$\dots$};

\node at (0,0.7) {$\scriptstyle v_1$};
\node at (2,0.7) {$\scriptstyle v_2$};
\node at (4,0.7) {$\scriptstyle v_3$};
\node at (6,0.7) {$\scriptstyle v_4$};

\node at (0,2) {$\scriptstyle f_1$};
\node at (2,2) {$\scriptstyle f_2$};
\node at (4,2) {$\scriptstyle f_3$};
\node at (6,2) {$\scriptstyle f_4$};

\node at (3,1.5) {$\scriptstyle e'_{21}$};
\node at (3,0.5) {$\scriptstyle e'_{22}$};

\node at (5,1.7) {$\scriptstyle e'_{31}$};
\node at (5,0.33) {$\scriptstyle e'_{33}$};

    \draw[style=semithick, -latex, blue] (21.west)--(11.east) 
    node[black, pos=0.5, anchor=south, inner sep=1pt]{$\scriptstyle e'_{11}$};

\draw[<-, style=semithick, -latex, blue] (31.north west)
    parabola[parabola height=0.2cm] (21.north east);

\draw[<-, style=semithick, -latex, blue] (31.south west)
    parabola[parabola height=-0.2cm] (21.south east);

    \draw[style=semithick, -latex, blue] (41.west)--(31.east) 
    node[black, pos=0.5, anchor=south, inner sep=1pt]{$\scriptstyle e'_{32}$};

\draw[<-, style=semithick, -latex, blue] (41.north west)
    parabola[parabola height=0.35cm] (31.north east);

\draw[<-, style=semithick, -latex, blue] (41.south west)
    parabola[parabola height=-0.35cm] (31.south east);
    \draw[style=semithick, style=dashed, -latex, red] (11.north east)
        .. controls (0.25,1.25) and (0.5,1.75) ..
        (0,1.75)
        .. controls (-0.5,1.75) and (-0.25,1.25) ..
        (11.north west);
    \draw[style=semithick, style=dashed, -latex, red] (21.north east)
        .. controls (2.25,1.25) and (2.5,1.75) ..
        (2,1.75)
        .. controls (1.5,1.75) and (1.75,1.25) ..
        (21.north west);
    \draw[style=semithick, style=dashed, -latex, red] (31.north east)
        .. controls (4.25,1.25) and (4.5,1.75) ..
        (4,1.75)
        .. controls (3.5,1.75) and (3.75,1.25) ..
        (31.north west);
    \draw[style=semithick, style=dashed, -latex, red] (41.north east)
        .. controls (6.25,1.25) and (6.5,1.75) ..
        (6,1.75)
        .. controls (5.5,1.75) and (5.75,1.25) ..
        (41.north west);
\end{tikzpicture}
\]
\noindent
By \cite[Corollary 3.12, Theorem 4.3, Lemma 4.8]{PRRS} the $C^*$-algebra $C^*(\Lambda\times_\beta\ZZ)$ is strongly Morita equivalent to an A$\TT$-algebra, the inductive limit of
\[
C(\TT)\rightarrow C(\TT)\otimes M_{2!}\rightarrow C(\TT)\otimes M_{3!}\rightarrow\cdots
\]

\noindent
with $K$-theory groups isomorphic to $\QQ$.  Fix $m\le n$, then for all $\alpha \in v_n ( \Lambda \times_\beta \ZZ )^{(m,0)}$ we have $v_m ( \Lambda \times_\beta \ZZ ) s(\alpha) \neq \emptyset$ and so $\Lambda \times_\beta \ZZ$ is cofinal. Fix $\alpha \in v_n ( \Lambda \times_\beta\ZZ )^{(N,0)}$, then by repeated use of \eqref{eq:fp} it follows that $o(\alpha)=n (n+1) \cdots (n+N)$ where $o(\alpha)$ is the order of $\alpha$ (see \cite[\S 5]{PRRS}). It is then easy to see that $\Lambda \times_\beta \ZZ$ has large-permutation factorisations (see \cite[Definition 5.6]{PRRS}) and so $C^*(\Lambda\times_\beta\ZZ)$ is simple and has real-rank zero by \cite[Theorem 5.7]{PRRS}.

We now turn our attention to computing  $H^1(\Lambda\times_\beta\ZZ, A)$; first we compute $H^1 ( \Lambda , A)$. Let $A$ be an abelian group. Observe that $\delta^0(f)(\lambda)=f(s(\lambda))-f(r(\lambda))$
where $f \in C^0(\Lambda, A)$, $\lambda\in\Lambda$
and $\delta^1: C^1(\Lambda, A)\to C^2(\Lambda, A)$ is the zero map. 
Hence $H^1(\Lambda, A)=C^1(\Lambda, A)/\text{im}\,\delta^0$ and, if we identify elements 
of $C^1(\Lambda, A)$ with ``double sequences" $a=(a^n_j)_{n\ge 1, 1\le j\le n}$ with $a^n_j\in A$, 
then $H^1(\Lambda, A)$ may be identified with the set of equivalence classes of such elements
where $a\sim b$ if there are $c_n\in A$ with $b^n_j=a^n_j+c_n, n\ge 1, j=1,\dots,n$. It follows that $H^1(\Lambda, A) \cong (\prod_{n\ge 1} A^n)/\!\sim$. 

To compute the cohomology $H^1(\Lambda\times_\beta\ZZ, A)$, observe that 
\[\delta^1(\varphi)(e_{nj}'f_{n+1}=f_ne_{n,j+1}')=\varphi(f_{n+1})+\varphi(e_{nj}')-\varphi(e_{n,j+1}')-\varphi(f_n),\]
so for $\varphi\in Z^1(\Lambda\times_\beta\ZZ,A)$ we have
\[\varphi(e_{n,j+1}')-\varphi(e_{nj}')=\varphi(f_{n+1})-\varphi(f_n)=\varphi(e_{n1}')-\varphi(e_{nn}')\;\text{for}\; n\ge 1, j=1,...,n-1.\]
Summing over $j$ we obtain $n(\varphi(f_{n+1})-\varphi(f_n))=0$ for $n\ge 1$. If $A$ is torsion free, then $\varphi(f_n)$ is constant and therefore $\varphi(e'_{nj})$ is constant. In this case, $H^1(\Lambda\times_\beta\ZZ, A)\cong A$. If $A$ has torsion, then 
\[H^1(\Lambda\times_\beta\ZZ, A)\cong A\times T_2(A)\times T_3(A)\times\cdots,\] where $T_n(A)$ denotes the $n$-torsion subgroup of $A$ for $n\ge 2$.

The last part of the long exact sequence
\[
\cdots \to H^1(\Lambda, A)\stackrel{1-\beta^*}{\longrightarrow}H^1(\Lambda, A)\stackrel{j^*}{\longrightarrow}H^2(\lambda\times_\beta\ZZ,A)\to 0,\]
implies that
\[H^2(\Lambda\times_\beta\ZZ, A)\cong \coker(1-\beta^*).\]
Since $\beta$ cyclically permutes the edges at each stage, the map $\beta^*:C^1(\Lambda, A)\to C^1(\Lambda, A)$ is $\prod \beta_n:\prod_{n\ge 1} A^n\to \prod_{n\ge 1} A^n$, where 
\[\beta_n:A^n\to A^n, \beta_n(a_1,...,a_n)=(a_n, a_1,...,a_{n-1}),\] an automorphism of order $n$. Therefore,
the map $1-\beta^*:C^1(\Lambda, A)\to C^1(\Lambda, A)$ is determined  by 
\[(1-\beta_n)(a_1,...,a_n)=(a_1-a_n, a_2-a_1,...,a_n-a_{n-1}).\]

Since $H^1(\Lambda, A)=C^1(\Lambda, A)/\text{im}\; \delta^0$, we conclude that 
\[
\coker(1-\beta^*)\cong \Bigg(\prod_{n\ge 1} A^n\Bigg)/(\text{im}(1-\beta^*)+\text{im}\; \delta^0).
\]

\noindent
First observe that $(\prod_{n\ge 1} A^n)/\text{im}(1-\beta^*)\cong \prod_{n\ge 1}A$ by the map $(a^n_j)\mapsto (\sum_{j=1}^na^n_j)$. Since $(a^n_j)\in$ im\,$\delta^0$ iff there are $c_n\in A$ with $a^n_j=c_n$ for all $j=1,\dots,n$, we conclude that im\,$\delta^0$ in $\prod_{n\ge 1}A$ is $\prod_{n\ge 1}nA$. It follows that
\[
H^2(\Lambda\times_\beta\ZZ, A)\cong\Bigg(\prod_{n\ge 1}A\Bigg)/\text{im}\,\delta^1\cong \prod_{n\ge 1}A/nA.
\]

\noindent
If $A$ is divisible, in particular if $A = \TT$, then $H^2(\Lambda\times_\beta\ZZ, \TT) \cong 0$.


Given $\varphi\in Z^2(\Lambda\times_\beta\ZZ, \TT)$, both $[\varphi]$ and 
$[i^*(\varphi)]$ are trivial since both $H^2(\Lambda\times_\beta\ZZ, \TT)$ and  $H^2(\Lambda, \TT)$ are trivial. It follows by Theorem \ref{thm:main} that
\[
C^*(\Lambda)\rtimes_{\beta_\varphi}\ZZ \cong C^*_\varphi(\Lambda\times_\beta\ZZ)
\cong C^*(\Lambda\times_\beta\ZZ) .
\]
\end{example}
\begin{example} \label{ex:3}
For $n \ge 1$ let $\underline{n} = \{ 0 , \ldots , n-1 \}$. Define a bijection $\theta : \underline{2} \times \underline{3} \to \underline{2} \times \underline{3}$ by $\theta ( i , j ) = ( i+1 \pmod 2 , j+1 \pmod 3 )$. Consider the $2$-graph $\FF_\theta^+$ defined in \cite{DPY}: $\FF_\theta^+$ is the unital semigroup generated by $f_0,f_1,g_0,g_1,g_2$ subject to the relations $f_i g_j = g_{j'} f_{i'}$ where $\theta (i,j)=(i',j')$, that is
\begin{equation} \label{eq:rels}
f_0 g_0=g_1 f_1, \ f_1 g_0=g_1 f_0, \ f_0 g_1= g_2 f_1, \ f_1 g_1=g_2 f_0, \ f_0 g_2=g_0 f_1, \ f_1 g_2= g_0 f_0 ,
\end{equation}

\noindent  the degree of $f_0,f_1$ is $e_1$ and the degree of $g_0,g_1,g_2$ is $e_2$.

If $\beta_1 = (01) \in S_2$ and $\beta_2 = (012) \in S_3$, then it is easy to check that $\theta \circ ( \beta_1 \times \beta_2 ) = ( \beta_1 \times \beta_2 ) \circ \theta$ and so $\beta = \beta_1 \times \beta_2$ induces an automorphism of $\FF_\theta^+$.

Since $\FF_\theta^+$ has only one vertex $v$, it follows that $v \FF_\theta^+ s( \alpha ) \neq \emptyset$ for all $\alpha \in \FF_\theta^+$ and so $\FF_\theta^+$ is cofinal. Furthermore $\FF_\theta^+$ is aperiodic by \cite[Corollary 3.2]{DY} since $\log 3$ and $\log 2$ are rationally independent. Hence $C^* (\FF_\theta^+ )$ is simple by \cite[Theorem 3.1]{RoSi}. Moreover, the loop $f_0$ has an entrance $f_1 g_0$ and since there is only one vertex, it follows by \cite[Proposition 8.8]{Si} that $C^* ( \FF_\theta^+ )$ is purely infinite. By \cite[Proposition 3.16]{GE} it follows that $K_0 ( C^* ( \FF_\theta^+ )) = K_1 ( C^* ( \FF_\theta^+ )) = 0$ since $M_1 = [2]$ and $M_2 = [3]$.  Since $C^*(\FF_\theta^+)$ is a Kirchberg algebra, $C^*(\FF_\theta^+) \cong \mathcal{O}_2$ by the Kirchberg-Phillips Theorem \cite{Kirchberg,Ph}.

To compute the cohomology of $\Lambda=\FF_\theta^+$ and of $\Lambda\times_\beta\ZZ$, we first compute the homology $H_n(\Lambda)$.  Next we determine  the maps $\beta_*:H_n(\Lambda)\to H_n(\Lambda)$ induced by the automorphism $\beta$ and then use the exact sequence  (see \cite[Theorem 4.13]{KPS3})  to compute $H_n(\Lambda\times_\beta\ZZ)$. Thereafter, we apply the Universal Coefficient Theorem  (see \cite[Theorem 7.3]{KPS3}) 
\[
0\to \Ext(H_{n-1}(\Lambda\times_\beta\ZZ),A)\to H^n(\Lambda\times_\beta\ZZ, A)\to \Hom(H_n(\Lambda\times_\beta\ZZ), A)\to 0
\]
to compute  $H^n(\Lambda\times_\beta\ZZ, A)$. 
We have 
\[
Q_0(\Lambda)=\{v\}, \qquad Q_1(\Lambda)=\{f_0,f_1,g_0,g_1,g_2\},
\]
\[ 
Q_2(\Lambda)=\{f_0 g_0,  \ f_0 g_1, \  f_0 g_2, \ f_1 g_0, \ f_1 g_1, \ f_1 g_2\} ,
\] 
\noindent from \eqref{eq:rels}.
Moreover, we have $\partial_0 = \partial_1=0$ and
\begin{align*}
&&\partial_2(f_0g_0) &= f_0+g_0-g_1-f_1, & \partial_2(f_1g_0)&=f_1+g_0-g_1-f_0, && \\
&& \partial_2(f_0g_1) &=f_0+g_1-g_2-f_1, & \partial_2(f_1g_1)&=f_1+g_1-g_2-f_0, &&\\
&& \partial_2(f_0g_2) &=f_0+g_2-g_0-f_1, & \partial_2(f_1g_2)&=f_1+g_2-g_0-f_0.
\end{align*}
Using the Smith normal form of $\partial_2$ we see that $\ker\partial_2$ has generators
\begin{align*}
b_1 &= f_0g_0+f_1g_0+2f_1g_1+2f_0g_2, \\
b_2 &= -2f_0g_0+f_0g_1-3f_1g_1-2f_0g_2, \\
b_3 &=2f_0g_0+2f_1g_1+f_0g_2+f_1g_2
\end{align*}
and $\text{im }\partial_2$ has generators 
\[f_0-f_1+g_0-g_1,\quad g_0-g_2, \quad g_1-g_2.\]
It follows that
\[H_0(\Lambda)=\ZZ,\quad  H_1(\Lambda)=\ZZ^5/\text{im }\partial_2\cong\ZZ^2,\quad  H_2(\Lambda)=\ker\partial_2\cong\ZZ^3.\]

Since $\beta(f_0)=f_1, \beta(f_1)=f_0, \beta(g_0)=g_1, \beta(g_1)=g_2, \beta(g_2)=g_0$ and $H_1(\Lambda)$ is generated by $[f_0], [g_0]$, we see that $\beta_*:H_1(\Lambda)\to H_1(\Lambda)$ is the identity map.

Since $\beta(b_1)=2b_1+b_2, \beta(b_2)=-2b_1+b_3, \beta(b_3)=b_1$, it follows that $\beta_*:H_2(\Lambda)\to H_2(\Lambda)$ has matrix
\[\left[\begin{array}{rrr}2&1&0\\-2&0&1\\1&0&0\end{array}\right]\]
and $\ker(1-\beta_*)\cong \ZZ, \text{im}(1-\beta_*)\cong \ZZ^2$.

Then by the long exact sequence of homology (see \cite[Theorem 4.13]{KPS3}) we obtain
\[H_1(\Lambda\times_\beta\ZZ)\cong \ZZ^3, \quad H_2(\Lambda\times_\beta\ZZ)\cong \ZZ^3, \quad H_3(\Lambda\times_\beta\ZZ)\cong \ZZ.
 \]
It follows that 
\[
H^0(\Lambda, A)\cong A, \quad H^1(\Lambda, A)\cong A^2,\quad H^2(\Lambda, A)\cong A^3.
\]
and 
\[
H^1(\Lambda\times_\beta\ZZ,A)\cong A^3, \quad H^2(\Lambda\times_\beta\ZZ, A)\cong A^3,\quad H^3(\Lambda\times_\beta\ZZ,A)\cong A.
\]
\noindent
We construct a representation of $C^*( \FF_\theta^+ )$ on the Hilbert space ${\mathcal H}=\ell^2({\mathbb N})\otimes {\mathbb C}^2\otimes\ell^2({\mathbb N})$ with basis $\{e_{i,j,k}\}_{i,k\ge 1,j=0,1}$ by taking
\[S_{f_0} (e_{i,0,k})=e_{2i-1,0,k}, S_{f_0} (e_{i,1,k})=e_{2i,1,k}, \ \ S_{f_1} (e_{i,0,k})=e_{2i,0,k}, S_{f_1} (e_{i,1,k})=e_{2i-1,1,k},
\]
\[
T_{g_0}(e_{i,j,k})=e_{i,1-j,3k-2}, \ T_{g_1} (e_{i,j,k})=e_{i,1-j,3k-1}, \ T_{g_2} (e_{i,j,k})=e_{i,1-j,3k}.
\]

\noindent
Then $S_{f_0}, S_{f_1}, T_{g_0},T_{g_1},T_{g_2}$ are isometries and $S_{f_0} S_{f_0}^*+S_{f_1}S_{f_1}^*=T_{g_0} T_{g_0}^*+T_{g_1}T_{g_1}^*+T_{g_2}T_{g_2}^*=I$. The commutation relations \eqref{eq:rels} are also satisfied, for example
\[
(S_{f_0}T_{g_2})(e_{i,0,k})=S_{f_0} (e_{i,1,3k})=e_{2i,1,3k}, \ (S_{f_0} T_{g_2})(e_{i,1,k})=S_{f_0} (e_{i,0,3k})=e_{2i-1,0,3k} .
\]

Fix $z, w \in \TT$ and let $c = c(z,w) \in Z^1 ( \FF_\theta^+ , \TT )$ be such that $c(f_0)=c(f_1)=z, c(g_i)=w$, for $i=0,1,2$. Suppose that  $z^n , w^n \neq 1$ for all $n$. 
As in Corollary~\ref{cor:qfa}  let $\kappa= \beta^c$  be the automorphism of $C^* ( \FF_\theta^+ )$ such that
\[
\kappa ( S_{f_i} ) =  z S_{f_{\beta_1 (i)} } 
\text{ for } i =0,1 
\text{ and }
\kappa ( T_{g_i} ) = w T_{g_{\beta_2 (i)} } \text{ for } i = 0,1,2 .
\]

\noindent 
We claim that $\kappa^n$ is outer for all $n$. To prove this we follow the technique of \cite{A,ETW1,ETW2}.

\noindent
Fix $n$. For contradiction assume there is  a unitary $V \in C^*( \FF_\theta^+ )$ such that $\kappa^n = Ad V$. We have $S_{f_0} e_{1,0,1}=e_{1,0,1}$ and 
\[
\kappa^n ( S_{f_0} ) V e_{1,0,1}=V S_{f_0} V^*V e_{1,0,1} = V e_{1,0,1} = \sum c_{i,j,k}e_{i,j,k} \neq 0.
\]

\noindent
On the other hand if $n$ is odd then
\[
\kappa^n ( S_{f_0} ) V e_{1,0,1}= z^n S_{f_1} \sum c_{i,j,k}e_{i,j,k} = z^n  \sum_{i,k=1}^\infty c_{i,0,k} e_{2i,0,k} + z^n \sum_{i,k=1}^\infty c_{i,1,k}e_{2i-1,1,k}.
\]

\noindent
Identifying coefficients, we get $c_{2i-1,0,k}=c_{2i,1,k}=0, c_{2i,0,k} = z^n c_{i,0,k}, c_{2i-1,1,k} = z^n c_{i,1,k}$, since $z^n \neq 1$ for all $n$ it follows by induction that $c_{i,j,k}=0$ for all $i,j,k$. If $n$ is even a similar argument applies.

Similarly, consider $x=\frac{1}{2}(e_{1,0,1}+e_{1,1,1})$. Then $T_{g_0} x = x$ and
\[
\kappa^n (T_{g_0})V x = V T_{g_0} V^*V x = Vx = \sum_{i,k=1}^\infty x_{i,j,k} e_{i,j,k} \neq 0.
\]

\noindent
On the other hand if $n$ is congruent to $1$ mod $3$ then we have
\[
\beta(T_{g_0}) V x = w^n T_{g_1}\sum_{i,k=1}^\infty x_{i,j,k} e_{i,j,k} = w^n \sum_{i,k=1}^\infty x_{i,0,k} e_{i,1,3k-1} + w^n \sum_{i,k=1}^\infty x_{i,1,k} e_{i,0,3k-1}.
\]

\noindent
Identifying coefficients,
\[
x_{i,0,3k} = x_{i,0,3k-2} = x_{i,1,3k} = x_{i,1,3k-2} = 0, \ x_{i,0,3k-1} = w^n x_{i,1,k}, \ x_{i,1,3k-1} = w^n x_{i,0,k}
\]

\noindent
and $x_{i,0,2} = w^n x_{i,1,1} =0$. Similarly $x_{i,1,2} = w^n x_{i,0,1}=0$ , since $w^n \neq 1$ for all $n$ it follows by induction that $c_{i,j,k}=0$ for all $i,j,k$. Other cases for $n$ follow in a similar manner.
Together they give a contradiction, so $\kappa^n$ is outer for all $n$. Hence by \cite[Lemma 10]{KK} and Corollary~\ref{cor:qfa} it follows that $C^* ( \FF_\theta^+ ) \rtimes_{\beta^c} \ZZ \cong C_{j^*(c)} ( \FF_\theta^+ \times_\beta \ZZ )$ is simple and purely infinite for all $c=c(z,w) \in Z^1 ( \FF_\theta^+ , \TT )$ such that $z^n , w^n \neq 1$ for all $n$. For such $c$ the Pimsner-Voiculescu six-term exact sequence and classification results show that $C^* ( \FF_\theta^+ ) \rtimes_{\beta^c} \ZZ \cong \mathcal{O}_2$.
\end{example}

\end{document}